\definecolor{MyDarkblue}{rgb}{0,0.08,0.50}
\definecolor{Brickred}{rgb}{0.65,0.08,0}
\newtheorem*{theorem*}{Theorem}
\newtheorem{theorem}{Theorem}[section]
\newtheorem{lemma}[theorem]{Lemma}
\newtheorem{proposition}[theorem]{Proposition}
\newtheorem{corollary}[theorem]{Corollary}
\theoremstyle{definition}
\newtheorem{definition}[theorem]{Definition}
\newtheorem{assumption}[theorem]{Assumption}
\newtheorem{remark}[theorem]{Remark}
\renewcommand{\P}{\mathbb{P}}
\newcommand{\Pv}{\mathbb{P}}
\newcommand{\eps}{\varepsilon}
\newcommand{\cB}{\mathcal{B}}
\newcommand{\cL}{\mathcal{L}}
\newcommand{\cM}{\mathcal{M}}\newcommand{\cO}{\mathcal{O}}
\newcommand{\cP}{\mathcal{P}}
\newcommand{\cT}{\mathcal{T}}\newcommand{\cU}{\mathcal{U}}
\newcommand{\e}{{\mathrm e}}
\newcommand{\R}{\mathbb{R}}
\newcommand{\N}{\mathbb{N}}
\newcommand{\Z}{\mathbb{Z}}
\newcommand{\dd}{\mathrm{d}}
\renewcommand{\emptyset}{\varnothing}
\newcommand*{\wt}{\widetilde}
\newcommand*{\be}{\begin{equation}}
	\newcommand*{\ee}{\end{equation}}
\newcommand*{\ba}{\begin{aligned}}
	\newcommand*{\ea}{\end{aligned}}
\newcommand*{\barr}{\begin{array}{c}}
	\newcommand*{\earr}{\end{array}}
\def \toinp    {\buildrel {\Pv}\over{\longrightarrow}}
\def \toindis  {\buildrel {d}\over{\longrightarrow}}
\def \toas     {\buildrel {a.s.}\over{\longrightarrow}}
\def\namedlabel#1#2{\begingroup
	#2%
	\def\@currentlabel{#2}%
	\phantomsection\label{#1}\endgroup
}
\newcommand{\bes}{\begin{equation*}}
	\newcommand{\ees}{\end{equation*}}
\renewcommand{\P}[1]{\mathbb{P}\!\left(#1\right)}
\newcommand{\E}[1]{\mathbb{E}\left[#1\right]}
\renewcommand{\N}{\mathbb{N}}
\numberwithin{equation}{section}
\renewcommand{\e}{\mathrm{e}}
\DeclareMathOperator{\exponentialrv}{Exp}
\newcommand{\Exp}[1]{\exponentialrv\left( #1 \right)}
\DeclareMathOperator{\Deg}{deg}
\newcommand{\outdeg}[1]{\ensuremath{\Deg^{+}(#1)}}
\newcommand{\ensymboldremark}{\hfill$\blacktriangleleft$}
\newcommand{\invisible}[1]{}
\title[A star is born: Explosive CMJ branching processes]{A star is born: Explosive Crump--Mode--Jagers branching processes}
\author[Lodewijks]{Bas Lodewijks}
\address{School of Mathematics and Physical Sciences, University of Sheffield}
\email{bas.lodewijks@sheffield.ac.uk}
\begin{document}
	\maketitle
	\noindent  \bigskip
	\\
	
	\begin{abstract}
		We study a family of Crump--Mode--Jagers branching processes in random environment that explode, i.e.\ that grow infinitely large in finite time with positive probability. Building on recent work of the author and Iyer (``On the structure of genealogical trees associated with explosive Crump--Mode--Jagers branching processes", arXiv:2311.14664, 2023), we weaken certain assumptions required to prove that the branching process, at the time of explosion, contains a (unique) individual with infinite offspring. We then apply these results to \emph{super-linear preferential attachment models}. In particular, we fill gaps in some of the cases analysed in Appendix A of the work of the author and Iyer and study a large range of previously unattainable cases.
	\end{abstract} 
	{\bf Keywords:}  Explosive Crump--Mode--Jagers branching processes, super-linear preferential attachment, preferential attachment with fitness, random recursive trees, condensation. 
	\\\\
	{\bf AMS Subject Classification 2010:} 60J80, 90B15, 05C80. 
	
\section{Introduction}\label{sec:intro}

In a Crump--Mode--Jagers (CMJ) branching process (named after~\cite{crump-mode, jagers-origin}) an ancestral \emph{root} individual produces offspring according to a collection of points on the non-negative real line. Each individual `born' produces offspring according to an identically distributed collection of points, translated by their birth time. One is generally interested in properties of the population as a function of time. Classical work from the 1970s and '80s related to this model generally deals with the \emph{Malthusian case}, which, informally, refers to the fact that the population grows exponentially in time; see e.g.~\cite{Arthreya-ney, jagers-book} and the references therein for an overview of classical CMJ theory. 

Far fewer results exist for CMJ branching processes when a Malthusian parameter does not exist. One family of CMJ branching processes for which this is the case, is the family of \emph{explosive} CMJ branching processes. Here, the total progeny of the branching process grows infinitely large in \emph{finite} time with positive probability. Early work on such CMJ branching processes by Sevast'yanov~\cite{Sev67,Sev70}, Grey~\cite{Grey74}, Grishechkin~\cite{Gris87}, and Vatutin~\cite{Vat87} studies necessary and sufficient conditions for (non-)explosion of Bellman-Harris processes (a special type  CMJ branching processes). More recent work on this topic distinguishes between processes where individuals produce finitely many offspring in finite time almost surely, and processes where individuals can produce infinite offspring themselves in finite time. In the former case, Komj\'athy provides general criteria for explosion of CMJ branching processes in terms of solutions of a functional fixed point equation~\cite{Komjathy2016ExplosiveCB}, and also extends the necessary and sufficient criteria for explosion in branching random walks in~\cite{amini-et-al}. In the latter case, Oliveira and Spencer, and very recently Iyer~\cite{Iyer24}, study a particular family of explosive CMJ branching processes with exponential inter-birth times~\cite{Oliveira-spencer}, and Sagitov considers Bellman-Harris processes where individuals, upon death, may produce infinite offspring~\cite{Sag16,Sag17}.

Recent work of the author and Iyer investigates a family of explosive CMJ branching processes in \emph{random environment}~\cite{IyerLod23}. Here, each individual born in the branching process is assigned an i.i.d.\ random weight, and the distribution of the point process that governs the offspring of the individual depends on their weight. Sufficient conditions for the emergence of \emph{infinite stars} (individuals that produce infinite offspring) and \emph{infinite paths} (an infinite lineage of descendants) in the branching process, stopped at the (finite) time of explosion are formulated in a general set-up. Other sufficient criteria for explosion for CMJ processes in random environment are also studied in~\cite{limiting-structure}. 

\textbf{Overview of our contribution. } In this paper we study the emergence of \emph{infinite stars} in a explosive CMJ processes in random environment. In particular, we weaken certain assumptions provided in previous work of the author and Iyer~\cite{IyerLod23}, under which we prove that an infinite star appears almost surely.

We use these results in an application to \emph{super-linear preferential attachment} trees with fitness. This model consists of a sequence of discrete-time trees in which vertices are assigned vertex-weights and arrive one by one. A new vertex connects to a random vertex in the tree, selected with a preference for vertices with high degree and/or vertex-weight. We largely extend the class of models for which such results can be proved. This significantly improves on the results in~\cite{IyerLod23}, and also adds to the recent work of Iyer~\cite{Iyer24} on persistence in (super-linear) preferential attachment trees (without fitness).

\textbf{Model definition. } We consider \emph{individuals} in the process as being labelled by elements of the infinite \emph{Ulam-Harris} tree $\cU_\infty : = \bigcup_{n \geq 0} \N^{n}$; where $\N^{0} := \{\varnothing\}$ contains a single element which we call  the \emph{root}. 
We denote elements $u \in \cU_{\infty}$ as tuples, so that, if $u = (u_{1}, \ldots, u_{k}) \in \N^{k}, k \geq 1$, we write $u = u_{1} \cdots u_{k}$. An individual $u = u_1u_2\cdots u_k$ is to be interpreted recursively as the $u_k^{\mathrm{th}}$ child of the individual $u_1 \cdots u_{k-1}$; for example, $1, 2, \ldots$ represent the offspring of $\varnothing$. Further, for individuals $u=(u_1,\ldots, u_k)$ and $v=(v_1,\ldots, v_\ell)$, we let $uv=(u_1,\ldots, u_k,v_1,\ldots, v_\ell)$ denote their concatenation. Suppose that $(\Omega, \Sigma, \mathbb{P})$ is a complete probability space and $(S, \mathcal{S})$ is a measure space. We also equip $\mathcal{U}_{\infty}$ with the sigma algebra generated by singleton sets. Then, we fix random mappings $X: \Omega \times \cU_{\infty} \rightarrow [0, \infty]$, $W: \Omega \times \cU_{\infty} \rightarrow S$, and define $(X, W): \Omega \times \cU_{\infty} \rightarrow [0, \infty] \times S$, so that $(\omega,u) \mapsto ((X(u))(\omega), W_{u}(\omega))$. 
In general, for $u \in \cU_{\infty}$ and $j\in \N$, one interprets $W_{u}$ as a `weight' associated with $u$, and $X(uj)$ the waiting time or \emph{inter-birth} times between the birth of the $(j-1)$th and $j$th \emph{child} of $u$.

We use the values of $X$ to associate \emph{birth times} $\mathcal{B}(u)$ to individuals $u \in \cU_{\infty}$. In particular, we define $\mathcal{B}: \Omega \times \cU_{\infty} \rightarrow [0, \infty]$ recursively as follows:  
\[\mathcal{B}(\varnothing) : = 0 \quad \text{and, for $u \in \cU_{\infty}, i \in \mathbb{N}$,} \quad \mathcal{B}(ui) := \mathcal{B}(u) + \sum_{j=1}^{i} X(uj).\]
We note that a value of $X(ui) = \infty$ indicates that the individual $u$ has stopped producing offspring, and produces at most $i-1$ children. 

We introduce some notation related to elements $u \in \cU_{\infty}$. We use $|\cdot|$ to measure the \emph{length} of a tuple $u$, so that $|u|=0$ when $u=\varnothing$ and $|u|=k$ when $u = u_{1} \cdots u_{k}$. If, for some $x \in \cU_{\infty}$, we have $x = u v$, we say $u$ is a \emph{ancestor} of $x$. Further, given $\ell \leq |u|$, we write $u_{|_\ell} := u_{1} \cdots u_{\ell}$ for the different ancestors of $u$.  We equip $\cU_{\infty}$ with the lexicographic total order $\leq_{L}$. Given elements $u, v$, we say $u \leq_{L} v$ if either $u$ is an ancestor of $v$, or $u_{\ell} < v_{\ell}$ where $\ell = \min \left\{i \in \mathbb{N}: u_{i} \neq v_{i} \right\}$. For $u \in \cU_{\infty}$, we let $\mathcal{P}_{i}(u)$ denote the time, after the \emph{birth} of $u$, required for $u$ to produce $i$ offspring. That is,
\be\label{eq:P}
\mathcal{P}_{i}(u) := \sum_{j=1}^{i} X(uj) \quad \text{and} \quad \mathcal{P}(u) := \mathcal{P}_{\infty}(u) = \sum_{j=1}^{\infty} X(uj).
\ee	
We use the notation $\mathcal{P}_{i}$ and $\mathcal{P}$ to denote i.i.d.\ copies of $\mathcal{P}_{i}(u)$ and $\mathcal{P}(u)$, respectively. 

For $t\geq 0$ we set $\mathscr{T}_{t} := \{x \in \cU_\infty: \cB(x) \leq t\}$ and identify $\mathscr{T}_{t}$ as the \emph{genealogical tree} of individuals with birth time at most $t$. For a given choice of $X, W$, we say $(\mathscr{T}_{t})_{t \geq 0}$ is the genealogical tree process associated with an $(X,W)$-\emph{Crump--Mode--Jagers} branching process; often, we refer to $(\mathscr{T}_{t})_{t \geq 0}$ directly as an $(X,W)$-\emph{Crump--Mode--Jagers} branching process, viewed as a stochastic process in $t$.

With regards to the process $(\mathscr{T}_{t})_{t \geq 0}$, we define the stopping times $(\tau_{k})_{k \in \mathbb{N}_{0}}$ such that 
\be \label{eq:tauk}
\tau_{k} := \inf\{t \geq 0: |\mathscr{T}_{t}| \geq k\},
\ee 
where we adopt the convention that the infimum of the empty set is $\infty$. 
One readily verifies that $(|\mathscr{T}_{t}|)_{t \geq 0}$ is right-continuous, and thus $|\mathscr{T}_{\tau_{k}}| \geq k$ (when $\tau_k<\infty$). For each $k \in \mathbb{N}$ we define the tree $\mathcal{T}_{k}$ as the tree consisting of the first $k$ individuals in $\mathscr{T}_{\tau_{k}}$ ordered by birth time, breaking ties lexicographically. We call 
\be 
\tau_{\infty} := \lim_{k \to \infty} \tau_{k}
\ee 
the \emph{explosion time} of the process, at which the branching process has grown infinitely large (given the process survives). We also define the tree $\mathcal{T}_{\infty} := \bigcup_{k=1}^{\infty} \mathcal{T}_{k}$.

\begin{remark} \label{rem:notation-form}
	With the more commonly used notation for CMJ branching processes, one assigns a point process (denoted $\xi^{(u)}$) to each $u \in \cU_{\infty}$, and refers to the points $\sigma^{(u)}_{1} \leq  \sigma^{(u)}_{2}, \ldots$ associated with this point process (in the notation used here $\mathcal{B}(u1), \mathcal{B}(u2), \ldots$). We do not use this framework here, because this requires one to be able to write the measure $\xi^{(u)} = \sum_{i=1}^{\infty} \delta_{\sigma_{i}}$, which requires one to impose $\sigma$-finiteness assumptions on the point process (see, for example, \cite[Corollary~6.5]{last-penrose}). This $\sigma$-finiteness is implied by the classical Malthusian condition, but, in this general setting, we believe it is easier to have a framework where one can directly refer to the points $\mathcal{B}(u1), \mathcal{B}(u2), \ldots$ 
\end{remark}

\textbf{Notation. } Throughout the paper we use the following notation: We let $\N:=\{1,2,\ldots\}$ and set $\N_0:=\{0,1,\ldots\}$ and $\R_+:=[0,\infty)$. For $n\in\N$, we set $[n]:=\{1,\ldots, n\}$. For $x\in\R$, we let $\lceil x\rceil:=\inf\{n\in\Z: n\geq x\}$ and $\lfloor x\rfloor:=\sup\{n\in\Z: n\leq x\}$. For sequences $(a_n)_{n\in\N}$ and $(b_n)_{n\in\N}$ such that $b_n$ is positive for all $n$, we say that $a_n=o(b_n)$, $a_n=\mathcal{O}(b_n)$, and $a_n=\Theta(b_n)$ if $\lim_{n\to\infty} a_n/b_n=0$, if there exists a constant $C>0$ such that $|a_n|\leq Cb_n$ for all $n\in\N$, and if $a_n=\cO(b_n)$ and $b_n=\cO(a_n)$, respectively. For random variables $X,(X_n)_{n\in\N}$ we let $X_n\toindis X, X_n\toinp X$ and $X_n\toas X$ denote convergence in distribution, probability and almost sure convergence of $X_n$ to $X$, respectively. For a non-negative, real-valued random variable $X$ and $\lambda > 0$, we let
\[
\mathcal{M}_{\lambda}(X) := \E{\exp(\lambda X)} \quad \text{and} \quad \mathcal{L}_{\lambda}(X) := \E{\exp(-\lambda X)}.
\]
Finally, for real-valued random variables $X,Y$, we say  $X \preceq Y$ if $Y$ stochastically dominates $X$. 

\textbf{Structure of the paper. } We present the main results in Section~\ref{sec:results} and discuss applications in Section~\ref{sec:aplex}. We prove the main results and applications in Section~\ref{sec:proofmain}. Section~\ref{sec:examplesproofs} finally investigates a number of examples. 

\section{Results}\label{sec:results}

Throughout this paper, we make the following general assumptions, that are common when studying (explosive) $(X,W)$-CMJ processes. We assume that
the pairs $((X(uj))_{j \in \mathbb{N}}, W_{u})$ with $u\in\cU_\infty$ are i.i.d. For a given $w \in S$, we let $(X_{w}(uj))_{j \in \mathbb{N}}$ denote a sequence $(X(uj))_{j \in \mathbb{N}}$, conditionally on the weight $W_{u} = w$. We also assume throughout that, for any  $w \in S$ and any $u\in\cU_\infty$, the random variables 
$(X_w(ui))_{i\in\N}$ are mutually independent.

\subsection{Sufficient criteria for an infinite star in CMJ branching processes in random environment}\label{sec:star}
Our main interest and aim is to understand \emph{how} an explosive CMJ branching process explodes. We determine sufficient conditions under which the branching process explodes due to an individual giving birth to an infinite number of children in finite time (an \emph{infinite star}). The assumptions are as follows.
\begin{assumption} \label{ass:star}
	$\;$
	\begin{enumerate}
		\item\label{item:stardom} There exist positive random variables $(Y_{n})_{n \in \mathbb{N}_{0}}$ with finite mean such that, for any $w \in S$ and all $n\in\N_0$, 
		\begin{equation} \label{eq:stochastic-bound}
			\sum_{i=n+1}^{\infty} X_{w}(i) \preceq Y_{n}. 
		\end{equation}
		\item\label{item:starlimsup} There exists an increasing sequence $(\lambda_n)_{n\in\N}\subset(0,\infty)$ such that $\lim_{n\to\infty}\lambda_n=\infty$, and 
		\be 
		\sum_{n=1}^\infty \cM_{\lambda_n}(Y_n) \cL_{\lambda_n}(\cP_n(\varnothing))<\infty. 
		\ee 
		\item\label{item:starnonzero} We have $ \E{\sup\{k: X(1)=\cdots=X(k) = 0\}} < 1$. Additionally, for each $n \in \mathbb{N}$, almost surely 
		\be
		\sum_{i=n+1}^{\infty} X(i) > 0.
		\ee 
	\end{enumerate}
\end{assumption}

\begin{remark}
	Note that under Condition~\ref{item:stardom} of Assumption~\ref{ass:star}, we have $\P{ \tau_\infty<\infty} =1$, since, for example, the root $\varnothing$ produces an infinite number of children in finite time, almost surely. 
	\ensymboldremark
\end{remark}

\begin{remark} \label{rem:assumpt-motiv}
	In Assumption~\ref{ass:star} we can consider Condition~\ref{item:stardom} as a \emph{uniform explosivity} condition: regardless of its weight, the distribution of the time until an individual produces infinite offspring, after having already produced $n$ children, is dominated by $Y_n$. This assumption is essential to work around dependencies that arise due to the random environment (i.e.\ the weights of individuals).
	
	Condition~\ref{item:starlimsup} is used in the proof of Proposition~\ref{prop:local-explosions}, and provides an upper bound for the expected number of children that produce infinite offspring before their parent does.
	
	Condition~\ref{item:starnonzero} is a technical assumption, necessary to rule out certain trivial cases. Indeed, if, for example, $\E{\sup\{k:X(k)=0\}} > 1$, the tree consisting of all the individuals born \emph{instantaneously} at time $0$ is a supercritical Bienaym\'{e}-Galton-Watson branching process. Hence, with positive probability  this tree is infinitely large, whilst it may not contain an individual with infinite offspring.  \ensymboldremark
\end{remark}

\begin{remark}
	Condition~\ref{item:starlimsup} \emph{weakens and combines} two conditions in Assumption $2.2$ in~\cite{IyerLod23}. There, it was assumed, with $\lambda_n:=c\E{Y_n}^{-1}$ and $c<1$, that $\lambda_n$ increases and diverges, and that 
	\be 
	\limsup_{n\to\infty}\cM_{\lambda_n}(Y_n)<\infty \qquad\text{and}\qquad \ \sum_{n=1}^\infty \cL_{\lambda_n}(\cP_n(\varnothing))<\infty.
	\ee 
	It is clear that these assumptions imply Condition~\ref{item:starlimsup}.
	\ensymboldremark
\end{remark}

We then have our main result.

\begin{theorem}[Infinite star] \label{thm:star}
	Under Assumption~\ref{ass:star}, the infinite tree $\cT_{\infty}$ almost surely contains a node of infinite degree $($an infinite star$)$. 
\end{theorem}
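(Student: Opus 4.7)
My plan is to show that the global infimum $\tau_\infty = \inf_{u \in \cU_\infty} \Sigma_u$, where $\Sigma_u := \cB(u) + \cP_\infty(u)$ denotes the individual explosion time of $u$, is almost surely \emph{attained} at some vertex $u^{\ast}$. Any such minimiser is automatically an infinite star: all of its infinitely many children are born by the time $\Sigma_{u^{\ast}} = \tau_\infty$, hence belong to $\cT_\infty$. Condition~(\ref{item:stardom}) with $n=0$ immediately gives $\Sigma_\varnothing < \infty$ a.s., so $\tau_\infty \leq \Sigma_\varnothing < \infty$ a.s., and conversely $\Sigma_u \geq \tau_\infty$ by construction, so $u$ has infinite degree in $\cT_\infty$ if and only if $\Sigma_u = \tau_\infty$.

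The main probabilistic input, which I expect to formalise as Proposition~\ref{prop:local-explosions}, controls, for every $u$, the random count
\[
Z(u) := \#\bigl\{n \geq 1 : \Sigma_{un} < \Sigma_u\bigr\}
\]
of children of $u$ that explode strictly earlier than $u$. Using the identity $\Sigma_{un} - \Sigma_u = \cP_\infty(un) - \sum_{i>n} X(ui)$ and the conditional independence (given $W_u$) of the subtree rooted at $un$ from the waiting times emitted by $u$, a Chernoff estimate at parameter $\lambda_n$, combined with the stochastic domination from Condition~(\ref{item:stardom}) and the trivial bound $\cP_\infty \geq \cP_n$, yields
\[
\P{\Sigma_{un} < \Sigma_u \,\big|\, W_u} \leq \cM_{\lambda_n}(Y_n)\, \E{\cL_{\lambda_n}(\cP_n(\varnothing); W)}.
\]
Summing in $n$ and invoking Condition~(\ref{item:starlimsup}) gives $\E{Z(u)} < \infty$, so $Z(u) < \infty$ almost surely for every $u$.

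To promote this per-vertex estimate to attainment of the global infimum, I plan a greedy descent: starting from $\varnothing$, iteratively move to a child $un$ satisfying $\Sigma_{un} < \Sigma_u$ (of which there are only finitely many by the previous step). If the procedure halts at some $u^{\ast}$, then $u^{\ast}$ has no strictly faster child, and an inductive argument combined with Condition~(\ref{item:starnonzero}) — which rules out pathological instantaneous cascades in which the strict inequalities could be achieved by zero-time descents — promotes this local minimum to the global minimiser of $\Sigma$. The serious obstacle is to exclude an infinite descent: finiteness of $\E{Z(u)}$ alone would permit a supercritical branching of strictly faster descendants, so one genuinely needs the \emph{quantitative} nature of Condition~(\ref{item:starlimsup}). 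Specifically, the tail $\sum_{n \geq N} \cM_{\lambda_n}(Y_n)\, \E{\cL_{\lambda_n}(\cP_n(\varnothing); W)}$ can be made arbitrarily small by choosing $N$ large, so the expected number of strictly faster children of index exceeding $N$ drops below $1$; a Borel--Cantelli argument applied generation by generation, with the finitely many low-index children treated separately by a direct union bound, should preclude an infinite descending chain and hence deliver the infinite star.
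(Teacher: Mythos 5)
Your overall target (attainment of the infimum of the individual explosion times $\Sigma_u=\cB(u)+\cP(u)$) and your single-child Chernoff estimate are both correct; the latter is exactly the depth-one case of the paper's Lemma~\ref{lem:cons-bound-gen}. However, there are two genuine gaps. First, the identity $\tau_\infty=\inf_{u\in\cU_\infty}\Sigma_u$ is not ``by construction'': only $\tau_\infty\le\inf_u\Sigma_u$ is immediate, while the reverse inequality must rule out the process accumulating infinitely many births strictly before any single individual has exploded (for instance along an infinite lineage). This is the content of the paper's Lemma~\ref{lem:expl-min-exp-time}, which itself rests on the non-explosivity of the $L$-moderate subtree (Proposition~\ref{prop:finite-l-moderate}) and on Lemma~\ref{lemma:tree-ident}; you also need those two results to conclude that the infinitely many children of a minimiser actually lie in $\cT_\infty$ rather than merely being born by time $\Sigma_{u^{*}}$.

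Second, and more seriously, the promotion step fails as described. A vertex $u^{*}$ at which your greedy descent halts satisfies only $\Sigma_{u^{*}n}\ge\Sigma_{u^{*}}$ for all children $n$; a grandchild $u^{*}nm$ may still explode strictly before $u^{*}$, so halting does not produce even a local minimiser over the subtree, and no induction on Condition~\ref{item:starnonzero} repairs this. Likewise, finiteness of $Z(u)$ for every $u$ together with the absence of infinite descending chains does not make the set of ``record'' vertices (those exploding before all their ancestors) finite, because a record descendant of $u$ need not be reached through a chain of intermediate records. Finally, the ``finitely many low-index children treated by a direct union bound'' are not finitely many: the vertices all of whose coordinates are at most $N$ form an infinite subtree, and a separate argument (Proposition~\ref{prop:finite-l-moderate}, via comparison with a conservative CMJ process using Condition~\ref{item:starnonzero}) is needed to show that only finitely many of them are born before $\tau_\infty$. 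What actually closes the argument in the paper is a global first-moment bound, over all of $\cU_\infty$ and all depths $m$, on the number of vertices with first coordinate exceeding $K'$ that explode before \emph{all} of their ancestors (Proposition~\ref{prop:local-explosions}); summability over depth is obtained by decomposing each label sequence into conservative blocks at its running maxima and extracting a geometric factor $\zeta^{m-1}$ from Lemma~\ref{lem:cons-bound-gen}. Your sketch contains the seed of this (the small tail in Condition~\ref{item:starlimsup}) but none of the combinatorial machinery that makes the sum over all depths converge.
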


\subsection{Application} 

We apply the result of Theorem~\ref{thm:star} when the inter-birth times are \emph{exponentially} distributed. In particular, this allows us to relate the results for $(X,W)$-CMJ branching processes to a family of recursively grown discrete trees, known as \emph{super-linear} preferential attachment trees with \emph{fitness}, introduced in~\cite{IyerLod23}. This is a sequence of trees where vertices are introduced one by one and are assigned vertex-weights (fitness values). When a new vertex is introduced, it connects to one of the vertices already in the tree, where vertices with high degree or high fitness are more likely to make connections with new vertices. 

We generally consider trees as being rooted with edges directed away from the root, and hence the number of `children' of a node corresponds to its \emph{out-degree}. More precisely, given a vertex labelled $v$ in a directed tree $T$ we let $\outdeg{v, T}$ denote its out-degree in $T$.  We now define the preferential attachment with fitness (PAF) model.

\begin{definition}\label{def:wrt}
	Suppose that $(W_{i})_{i \in \mathbb{N}}$ are i.i.d.\ copies of a random variable $W$ that takes values in $S$, and let $f\colon\mathbb{N}_{0}\times S \rightarrow (0, \infty)$ denote the \emph{fitness function}. A preferential attachment tree with fitness is the sequence of random trees $(T_{i})_{i \in \mathbb{N}}$ such that $T_{1}$ consists of a single node $1$ with weight $W_{1}$ and for $n \geq 2$, $T_{n}$ is constructed, conditionally on $T_{n-1}$, as follows:
	\begin{enumerate}
		\item Sample a vertex $j \in T_{n-1}$ with probability
		\begin{equation}
			\frac{f(\outdeg{j, T_{n-1}}, W_j)}{\sum_{i=1}^{n-1} f(\outdeg{i,T_{n-1}}, W_i)}.
		\end{equation}
		\item Connect $j$ with an edge directed outwards to a new vertex $n$ with weight $W_{n}$.
	\end{enumerate}
\end{definition}

The correspondence between preferential attachment trees with fitness and $(X,W)$-CMJ process with exponential inter-birth times is as follows. For each $u\in\cU_\infty$, set $X(ui)\sim \Exp{f(i-1,W_u)}$ for $i\in\N$. The trees $(\mathcal{T}_{i})_{i \in \N}$ associated with the $(X, W)$-CMJ process then satisfy 
\be 
\{T_i:i\in\N\}\overset d=\{\cT_i: i\in\N\}. 
\ee 
Additionally, with $T_\infty:=\cup_{n=1}^\infty T_n$, we also have $T_\infty \overset d=\cT_\infty$. As a result, the structural properties of $\cT_\infty$ can be translated to $T_\infty$. The equality in distribution is a consequence of the memory-less property and the fact that the minimum of exponential random variables is also exponentially distributed, with a rate given by the sum of the rates of the corresponding variables; see for example~\cite[Section~2.1]{rec-trees-fit}. The use of continuous-time embeddings of combinatorial processes is pioneered by Athreya and Karlin~\cite{Arthreya-karlin-embedding-68}.

We require the following assumptions on the fitness function $f$:
\be\label{eq:minfass}
\exists w^*\in S: \forall w\in S, j\in \N: f(j,w)\geq f(j,w^*) \quad \text{and } \quad \sum_{j=0}^{\infty} \frac{1}{f(j, w^{*})} < \infty.\tag{$w^*$}
\ee 
That is, there exists a minimiser $w^*\in S$ that, uniformly in $j\in \N$, minimises $f(j,\cdot)$, and the reciprocals of $f(j,w^*)$ are summable. Whilst the latter condition is necessary to have an $(X,W)$-CMJ branching process where individuals can produce an infinite number of children in finite time (i.e.\ where an infinite star can appear), the former assumption is to avoid certain technicalities only. We also define the quantities 
\be \label{eq:mu}
\mu_n^w:= \sum_{i=n}^\infty \frac{1}{f(i,w)}, \qquad w\in \R_+, n\in \N, \quad \text{and set } \mu_n:=\mu_n^{w^*}.
\ee 
Define $\mu_x$ for $x\in\R_+\setminus\N_0$ by linear interpolation. We then have the following corollary. 

\begin{corollary}[Infinite star in super-linear preferential attachment trees]\label{cor:suplinpa}
	Let $(T_{i})_{i \in \mathbb{N}}$ be a preferential attachment tree with fitness function $f$ that satisfies Assumption~\eqref{eq:minfass}. Let $(\lambda_n)_{n\in\N}\subset [0,\infty)$ be increasing and tend to infinity with $n$, such that there exists $N\in\N$ with $\lambda_n< f(i,w^*)$ for all $i\geq n\geq N$. If
	\be \label{eq:star-explosive-rif}
	\sum_{n=N}^{\infty} \bigg(\prod_{i=n}^\infty \frac{f(i,w^*)}{f(i,w^*)-\lambda_n}\bigg)\E{\prod_{i=0}^{n-1}\frac{f(i,W)}{f(i,W) + \lambda_n}} < \infty,
	\ee 
	then the tree $T_{\infty}$ contains a unique node of infinite degree, and no infinite path, almost surely. 
\end{corollary}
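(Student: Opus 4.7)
My plan is to realize the PAF model as an $(X,W)$-CMJ process with exponential inter-birth times and verify each part of Assumption~\ref{ass:star}, after which Theorem~\ref{thm:star} delivers the infinite star. Under the standard embedding, $X(ui) \sim \Exp{f(i-1, W_u)}$, so conditionally on $W_u = w$ the variables $X_w(i)$ are independent with rate $f(i-1, w)$.

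For Condition~\ref{item:stardom}, hypothesis~\eqref{eq:minfass} gives $f(i, w) \geq f(i, w^*)$, so the exponential domination $X_w(i+1) \preceq E_i$ holds, where $E_i \sim \Exp{f(i, w^*)}$ can be taken mutually independent; setting $Y_n := \sum_{i=n}^{\infty} E_i$ with $\E{Y_n} = \mu_n < \infty$ yields $\sum_{i=n+1}^{\infty} X_w(i) \preceq Y_n$. Condition~\ref{item:starnonzero} is immediate since $f > 0$ forces each $X(i)$ to be almost surely strictly positive. For Condition~\ref{item:starlimsup} I would compute both transforms explicitly: by independence of the $E_i$,
\begin{equation*}
\mathcal{M}_{\lambda_n}(Y_n) = \prod_{i=n}^{\infty} \frac{f(i, w^*)}{f(i, w^*) - \lambda_n},
\end{equation*}
which is finite for $n \geq N$ by the hypothesis on $\lambda_n$, while conditioning on $W$ and using the Laplace transform of an exponential gives
\begin{equation*}
\mathcal{L}_{\lambda_n}(\mathcal{P}_n(\varnothing); W) = \prod_{j=1}^{n} \frac{f(j-1, W)}{f(j-1, W) + \lambda_n} = \prod_{i=0}^{n-1} \frac{f(i, W)}{f(i, W) + \lambda_n}.
\end{equation*}
Substituting these two expressions into Condition~\ref{item:starlimsup} and dropping the finitely many terms with $n < N$ reduces it exactly to~\eqref{eq:star-explosive-rif}. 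Theorem~\ref{thm:star} then provides a node of infinite degree in $\mathcal{T}_\infty$, which is distributed as $T_\infty$.

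The uniqueness of the infinite star and the absence of an infinite path are stronger than what Theorem~\ref{thm:star} alone delivers, and I expect these to be the main technical obstacle. My approach here would exploit the quantitative content underlying Condition~\ref{item:starlimsup} which, as Remark~\ref{rem:assumpt-motiv} indicates, bounds the expected number of children that produce an infinite offspring before their parent does. Iterating this bound across generations and invoking Borel--Cantelli should rule out the event that two disjoint subtrees each contain an infinite star, yielding uniqueness; a similar counting argument along ancestral lines, combined with the fact that the first waiting time of any individual is exponentially distributed with rate $f(0, W)$ and hence does not concentrate at zero unconditionally, should rule out the existence of an infinite ray in $T_\infty$.
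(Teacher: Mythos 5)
Your verification of Assumption~\ref{ass:star} is exactly the paper's argument and is correct: with $Y_n$ a sum of independent $\Exp{f(j,w^*)}$ variables over $j\geq n$, one gets $\cM_{\lambda_n}(Y_n)=\prod_{i=n}^\infty f(i,w^*)/(f(i,w^*)-\lambda_n)$ and $\cL_{\lambda_n}(\cP_n(\varnothing);W)=\prod_{i=0}^{n-1}f(i,W)/(f(i,W)+\lambda_n)$, so Condition~\ref{item:starlimsup} reduces to~\eqref{eq:star-explosive-rif}, and Conditions~\ref{item:stardom} and~\ref{item:starnonzero} are immediate from~\eqref{eq:minfass} and the absence of an atom at zero. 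One small point you gloss over: for $n<N$ the hypothesis does not guarantee $\lambda_n<\inf_{i\geq n}f(i,w^*)$, so $\cM_{\lambda_n}(Y_n)$ could a priori be infinite and the terms with $n<N$ cannot simply be ``dropped'' from Condition~\ref{item:starlimsup}; the fix is to shrink the first $N-1$ values of $\lambda_n$ (keeping the sequence increasing), which is harmless since only the tail matters and $\inf_j f(j,w^*)>0$.

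The genuine gap is in the uniqueness and no-infinite-path claims. The paper does not reprove these; it imports them from \cite[Theorem $2.12$]{IyerLod23}, which applies to any inter-birth distribution without atoms. Your sketch for this part would not go through as written. Uniqueness is not a matter of excluding two disjoint subtrees each containing a star: it follows because, inside the proof of Theorem~\ref{thm:star}, $\tau_\infty$ is realised as the infimum of $\cB(u)+\cP(u)$ over the almost surely \emph{finite} set $E_{\mathrm{expl}}$ of individuals that explode before all their ancestors, and atomlessness of the exponential distribution makes the minimiser over this finite set almost surely unique. Likewise, the absence of an infinite path is not a consequence of the first inter-birth time being non-degenerate at zero --- that observation only verifies Condition~\ref{item:starnonzero}. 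An infinite path would require infinitely many individuals along a lineage to be born before $\tau_\infty$, and ruling this out again rests on the finiteness of $E_{\mathrm{expl}}$ (via Propositions~\ref{prop:local-explosions} and~\ref{prop:finite-l-moderate}) together with the atomless tie-breaking; this is precisely the content of the cited theorem. So you should either cite that result, as the paper does, or supply an actual argument along these lines rather than the Borel--Cantelli heuristic you propose.
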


\begin{remark}\label{rem:otherdistr}
	Though we present a corollary for the specific choice of exponentially distributed inter-birth times, similar results can be proved to hold when considering other distributions for the inter-birth times (e.g.\ beta, gamma, Rayleigh distributions).\ensymboldremark
\end{remark}

The corollary follows by showing the conditions in Assumption~\ref{ass:star} are met. It is readily verified that Condition~\ref{item:starnonzero} is satisfied, as the exponential distribution does not have an atom at zero. Setting 
\be \label{eq:Ynexp}
Y_n:=\sum_{i=n+1}^\infty \wt X_{w^*}(i),
\ee 
where $\wt X_{w^*}(i)$ is an independent copy of $X_{w^*}(i)$, and applying Assumption~\eqref{eq:minfass},  Condition~\ref{item:stardom} follows. Finally, Condition~\ref{item:starlimsup} is equivalent to~\eqref{eq:star-explosive-rif} when the inter-birth times are exponentially distributed and $Y_n$ is as in~\eqref{eq:Ynexp}. The uniqueness of the node of infinite degree and absence of an infinite path follows from~\cite[Theorem $2.12$]{IyerLod23}, and is true in general for inter-birth time distributions that have no atoms.

We juxtapose Corollary~\ref{cor:suplinpa} with the following result from~\cite{IyerLod23}: 

\begin{theorem}[Theorem $3.4$ in~\cite{IyerLod23}] \label{thrm:path}
	Recall $\mu_n^w$ from~\eqref{eq:mu}. If, for some $c>1$ and all $w\geq 0$, we have 
	\be 
	\sum_{n=1}^\infty \E{\prod_{i=0}^\infty \frac{f(i,W)}{f(i,W)+c(\mu_n^w)^{-1}\log n}}=\infty, 
	\ee 
	then $T_\infty$ contains a unique infinite path and no node of infinite degree, almost surely. 
\end{theorem}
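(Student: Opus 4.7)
The plan is a direct conditional Borel--Cantelli argument along generations. Fix the root $\varnothing$ with weight $W_\varnothing = w$, write $R_n := \sum_{j>n} X_w(j)$ for the remainder of its own explosion clock, and let $\cP_i$ denote the (independent) time the $i$-th child of the root requires to produce its own infinite offspring, with $\cP_i$ distributed as $\cP$ under weight $W_i$. The event $A_i := \{\cP_i < R_i\}$ is ``child $i$ explodes before the root''; conditionally on $(w,(X_w(j))_{j\in\N})$, the events $A_i$ are independent with probabilities $F_\cP(R_i)$, where $F_\cP$ is the marginal CDF of $\cP$. Showing $\sum_i F_\cP(R_i) = \infty$ almost surely and applying conditional Borel--Cantelli produces infinitely many children of $\varnothing$ that explode before it; iterating this construction along a nested sequence of generations then yields the infinite path in $T_\infty$.

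The bridge to the hypothesis is the exponential Laplace identity
\begin{equation*}
\cL_\lambda(\cP;w) = \prod_{i=0}^\infty \frac{f(i,w)}{f(i,w)+\lambda},
\end{equation*}
valid for independent exponential inter-birth times, which shows that the displayed hypothesis is exactly $\sum_n \E{\cL_{\lambda_n}(\cP;W)} = \infty$ with $\lambda_n = c(\mu_n^w)^{-1}\log n$. Picking any $c'' \in (1,c)$ and setting $t_n^w := \mu_n^w/c''$ gives $\lambda_n t_n^w = (c/c'')\log n$ with $c/c'' > 1$. The elementary bound $\Pv(U \geq u) \geq \E{U} - u$, valid for $U \in [0,1]$ and $u \in [0,1]$, applied to $U = \e^{-\lambda_n \cP}$ conditionally on $W$, gives
\begin{equation*}
\Pv(\cP < t_n^w \mid W) \;\geq\; \cL_{\lambda_n}(\cP;W) - n^{-c/c''}.
\end{equation*}
Taking expectation over $W$ and summing, the hypothesis of the theorem and the convergent tail $\sum_n n^{-c/c''}<\infty$ deliver $\sum_n F_\cP(t_n^w) = \infty$ for every fixed $w\geq 0$.

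To turn this into $\sum_n F_\cP(R_n) = \infty$ almost surely, I exploit that $R_n$ concentrates near its mean $\mu_{n+1}^w$: under Assumption~\eqref{eq:minfass}, $\Var(R_n) = \sum_{j>n} f(j,w)^{-2} = o((\mu_n^w)^2)$, so Paley--Zygmund yields $\liminf_n \Pv(R_n \geq t_n^w \mid W_\varnothing = w) > 0$. Combined with monotonicity of $F_\cP$, this gives $\E[F_\cP(R_n) \mid W_\varnothing = w] \geq c_0\, F_\cP(t_n^w)$ for some $c_0>0$ and all sufficiently large $n$, whence $\sum_n \E[F_\cP(R_n) \mid W_\varnothing] = \infty$ almost surely. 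A second-moment estimate on the partial sums $\sum_{n \leq N} F_\cP(R_n)$---using that the $R_n$'s are a decreasing tail process so their joint fluctuations can be controlled by a Kolmogorov-type maximal inequality---then upgrades expected divergence to almost sure divergence, completing the conditional Borel--Cantelli step.

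For uniqueness of the infinite path and the absence of a node of infinite degree, I would invoke~\cite[Theorem~2.12]{IyerLod23}, already cited in the discussion following Corollary~\ref{cor:suplinpa}, which rules out any competing infinite structure in $(X,W)$-CMJ trees with atomless inter-birth distributions. The main obstacle is the upgrade from expected divergence to almost sure divergence in the third paragraph: translating the Laplace-transform hypothesis into an honest lower bound on $\Pv(\cP < t_n^w \mid W)$ is essentially elementary, but controlling the joint fluctuations of the strongly dependent remainders $(R_n)_{n\in\N}$---and verifying that assumption~\eqref{eq:minfass} is strong enough to tame the variance sum $\sum_{j>n} f(j,w)^{-2}$ uniformly in $w$---is where the real technical work lies, and is the step most likely to require extra hypotheses beyond~\eqref{eq:minfass} in a fully rigorous write-up.
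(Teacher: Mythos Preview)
Your first two paragraphs correctly recover the two ingredients that \cite{IyerLod23} actually uses: the Laplace-transform lower bound $\Pv(\cP < t_n^w) \geq \E{\cL_{\lambda_n}(\cP;W)} - n^{-c/c''}$ and the Paley--Zygmund bound on the tail remainder $R_n$. (A small correction: under~\eqref{eq:minfass} one only gets $\Var(R_n)\leq (\mu_n^w)^2$, not $o((\mu_n^w)^2)$; but $\leq$ is exactly what Paley--Zygmund needs, giving $\Pv(R_n\geq d\mu_n^w)\geq (1-d)^2/2$.) These two facts are precisely the verification of the abstract path condition (Assumption~2.4 in \cite{IyerLod23}) with $\nu_n^w=d\mu_n^w$.

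The genuine gap is where you locate it yourself: upgrading $\sum_n \E{F_\cP(R_n)\mid W_\varnothing}=\infty$ to almost-sure divergence of $\sum_n F_\cP(R_n)$. The $R_n$ are nested tail sums, so they are extremely strongly correlated, and a second-moment or Kolmogorov-type argument does not go through without further structure; you would effectively need to control $\Cov(F_\cP(R_m),F_\cP(R_n))$ for all pairs, which the hypothesis does not give. The proof in \cite{IyerLod23} avoids this entirely by \emph{decoupling} the two sources of randomness rather than combining them into a single conditional Borel--Cantelli. First, since the children's explosion times $\cP(ui)$ are i.i.d.\ and independent of $W_u$, the events $\{\cP(ui)<\nu_i^{W_u}\}$ are conditionally independent given $W_u$, and $\sum_i\Pv(\cP<\nu_i^w)=\infty$ already yields an infinite random index set $\{i_\ell\}$ with $\cP(ui_\ell)<\nu_{i_\ell}^{W_u}$. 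Second, one passes to a \emph{sparse} subsequence $(i_{\ell_n})$ chosen so that the block increments $\sum_{k=i_{\ell_n}+1}^{i_{\ell_{n+1}}}X_{W_u}(uk)$ are over disjoint index ranges and hence conditionally independent; Paley--Zygmund guarantees each block exceeds $\nu_{i_{\ell_n}}^{W_u}$ with probability bounded below, and a second application of Borel--Cantelli finishes. This sparsification trick is the missing idea: it trades your dependent sequence $(R_n)_{n\in\N}$ for an independent block decomposition of the parent's clock, so no joint control of the $R_n$ is ever needed.
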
 

In the upcoming sub-section we provide a range of examples for which the conditions in Corollary~\ref{cor:suplinpa} can be satisfied, for which we use $\lambda_n=\delta \mu_n^{-1} \log n$, where $\delta>0$ is a sufficiently small constant and $\mu_n$ is as in~\eqref{eq:mu}. For this choice of $\lambda_n$, we see that Corollary~\ref{cor:suplinpa} and Theorem~\ref{thrm:path} are close to being converse results. Indeed, we confirm this in a range of examples in the upcoming sub-section.

\subsubsection{Super-linear preferential attachment with fitness} 

We proceed by studying a family of preferential attachment trees (with fitness) for which we can verify the conditions provided in Corollary~\ref{cor:suplinpa}. In doing so, we extend the number of models for which it is known that $T_\infty$ contains a unique node of infinite degree from a handful of examples to larger, but more particular, class. This class consists of a family of \emph{multiplicative} fitness functions. That is, vertex-weights take values in $S=\R_+$, and $f(i,w)=g(w)s(i)$ for some functions $g\colon \R_+\to (0,\infty)$ and $s\colon \N_0\to (0,\infty)$. To satisfy Assumption~\eqref{eq:minfass}, we assume that
\be \label{eq:gsass}
\exists w^*\in \R_+\ \forall w\in \R_+: g(w)\geq g(w^*)>0,\qquad \text{and,}\qquad \ \sum_{i=0}^\infty \frac{1}{s(i)}<\infty. 
\ee 	
Our main interest is functions $s$ that grow \emph{barely faster than linear}, for which the summability condition in~\eqref{eq:gsass} is only just satisfied. For example, $s(n)=n\log(n+2)\log\log(n+3)^\sigma$, with $\sigma>1$. Earlier work by the author and Iyer in~\cite{IyerLod23} is only able to deal with cases for which $s(n)$ grows fast enough, that is, faster than $n(\log n)^\alpha$ with $\alpha>2$. Here, we extend this to a much wider range of functions that grow slower. 

We introduce the following assumptions on the function $s$. 

\begin{assumption}\label{ass:s}
	Suppose $s$ satisfies~\eqref{eq:gsass}. Moreover, suppose there exist $\beta\in(0,1)$ and $p\in(1,1+\beta)$ such that 
	\be 
	\lim_{n\to\infty}\frac{s(n)}{n^\beta}=\infty\qquad \text{and}\qquad \lim_{n\to\infty}\frac{s(n)}{n^p}=0,
	\ee 
	and there exist $C>0$ and $N\in\N$ such that for all $n\geq N$, 
	\be 
	s(n)\leq C\frac{n^{1+\beta-p}}{\log n}\inf_{i\geq n}s(i). 
	\ee 
\end{assumption}

\begin{remark}
	Though the first limit in Assumption~\ref{ass:s} may seem unnecessary when $s$ satisfies~\eqref{eq:gsass}, one can construct examples of $s$ with a sufficiently sparse summable subsequence that grows slowly. For example, with $\eps>0$ small, 
	\be 
	s(n)=\begin{cases}
		n^{(1+\eps)/2} &\mbox{if } \sqrt n\in\N, \\ 
		(n+2)(\log (n+2))^2 &\mbox{otherwise.}
	\end{cases}
	\ee 
	Though this choice of $s$ satisfies~\eqref{eq:gsass}, the subsequence $s(n_k)$ with $n_k=k^2$ grows relatively slowly compared to the values $s(n)$ when $n$ is not a perfect square, so that the first limit in Assumption~\ref{ass:s} is satisfied only for $\beta<\frac{1+\eps}{2}$. The inequality in Assumption~\ref{ass:s} quantifies how much slower such sparse slowly-growing sub-sequences are allowed to grow. Over-all, Assumption~\ref{ass:s} thus allows us to deal with some choices of $s$ with sparse slowly growing subsequences.\ensymboldremark
\end{remark}

We then have the following result.

\begin{theorem}[Barely super-linear preferential attachment with fitness]\label{thrm:suplinpafit}
	Suppose that $f(i,w)=g(w)s(i)$, where $s$ satisfies Assumption~\ref{ass:s} and $g$ and $s$ satisfy~\eqref{eq:gsass}. Suppose there exists a sequence $(k_n)_{n\in\N}$ and constants $\eps>0, \delta\in(0,\eps)$, and $n_0\in\N$,  such that both 
	\be 
	\P{g(W)>k_n}\leq n^{-(1+\eps)}, \qquad\text{for all }n\geq n_0,
	\ee 
	and
	\be \label{eq:assknmu}
	\lim_{n\to\infty} \frac{\mu_{(\delta\log(n)/(\mu_n k_n))^{1/\beta}}}{\mu_nk_n}=\infty
	\ee  
	are satisfied, where $\beta$ is as in Assumption~\ref{ass:s}. Then, the limiting infinite tree $T_\infty$ contains a unique infinite star and no infinite path, almost surely. 		  
\end{theorem}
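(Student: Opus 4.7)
The plan is to apply Corollary~\ref{cor:suplinpa} with the choice $\lambda_n:=\delta \mu_n^{-1}\log n$ for some sufficiently small $\delta\in(0,\eps/2)$, and to bound the product and the expectation factors in~\eqref{eq:star-explosive-rif} separately so that the resulting series is summable.

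\textbf{Preliminary requirements on $(\lambda_n)$.} Monotonicity of $n\mapsto\mu_n$ and the summability in~\eqref{eq:gsass} give that $\lambda_n$ is increasing and tends to infinity. Writing $m_n:=\inf_{i\geq n}s(i)$, the condition $\lambda_n<f(i,w^*)=g(w^*)s(i)$ for all $i\geq n$ is $\lambda_n<g(w^*)m_n$, equivalently $m_n\mu_n>\delta\log(n)/g(w^*)$. The first limit in Assumption~\ref{ass:s} yields $s(n)\geq Mn^\beta$ for any $M$ and $n$ large, so the third condition gives $m_n\geq M n^{p-1}\log(n)/C$, while the second limit together with the integral lower bound $\sum_{i\geq n}1/i^p\geq n^{1-p}/(p-1)$ gives $\mu_n\gtrsim n^{1-p}$. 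Multiplying, $m_n\mu_n/\log n\to\infty$, which in fact yields the stronger bound $\lambda_n\leq g(w^*)m_n/2$ for $n$ large.

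\textbf{Bounding the product factor.} For $i\geq n$, the above shows $\lambda_n/(g(w^*)s(i))\leq 1/2$, so the inequality $-\log(1-x)\leq 2x$ on $[0,1/2]$ gives
\[
\log\prod_{i=n}^\infty\frac{g(w^*)s(i)}{g(w^*)s(i)-\lambda_n}\leq\sum_{i=n}^\infty\frac{2\lambda_n}{g(w^*)s(i)}=2\lambda_n\mu_n=2\delta\log n,
\]
i.e.\ the product is at most $n^{2\delta}$.

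\textbf{Bounding the expectation factor.} Split on $\{g(W)\leq k_n\}$ and its complement. The complement contributes at most $\P{g(W)>k_n}\leq n^{-(1+\eps)}$. On $\{g(W)\leq k_n\}$, since $x\mapsto x/(x+a)$ is increasing in $x$ for $a>0$,
\[
\prod_{i=0}^{n-1}\frac{g(W)s(i)}{g(W)s(i)+\lambda_n}\leq\prod_{i=0}^{n-1}\frac{k_ns(i)}{k_ns(i)+\lambda_n}.
\]
Setting $j_n:=\inf\{i:k_ns(i)\geq\lambda_n\}$ and using $\log(1+x)\geq x/2$ on $[0,1]$ on the factors $i\geq j_n$ (bounding the remaining factors by $1$),
\[
\prod_{i=0}^{n-1}\frac{k_ns(i)}{k_ns(i)+\lambda_n}\leq\exp\!\left(-\frac{\lambda_n g(w^*)}{2k_n}(\mu_{j_n}-\mu_n)\right).
\]
The bound $s(i)\geq Mi^\beta$ for $i$ large and fixed $M\geq 1$ yields $j_n\leq a_n:=(\delta\log n/(\mu_nk_n))^{1/\beta}$, so $\mu_{j_n}\geq\mu_{a_n}$. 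Assumption~\eqref{eq:assknmu} states $\mu_{a_n}/(\mu_nk_n)\to\infty$, hence $\mu_{j_n}-\mu_n\geq\mu_{a_n}/2$ for $n$ large, and substituting $\lambda_n=\delta\log(n)/\mu_n$ the bound reads $n^{-\delta g(w^*)\mu_{a_n}/(4\mu_nk_n)}$, which by~\eqref{eq:assknmu} is at most $n^{-L}$ for any prescribed $L$, for $n$ large. Hence the expectation is at most $n^{-(1+\eps)}+n^{-L}$.

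\textbf{Conclusion.} Combining, the summand in~\eqref{eq:star-explosive-rif} is dominated by $n^{2\delta-(1+\eps)}+n^{2\delta-L}$; choosing $\delta<\eps/2$ and $L>2$ makes the series summable, so Corollary~\ref{cor:suplinpa} applies, yielding the unique infinite star and absence of an infinite path. The main obstacle is the simultaneous control of the product factor (which grows with $n$ because $\lambda_n$ is close to the rates $g(w^*)s(i)$) and the expectation factor (which must decay faster than the product grows), in a regime where $s$ is only \emph{barely} super-linear. The third condition in Assumption~\ref{ass:s} is the crucial ingredient preventing $s$ from dipping far below its current value after time $n$, which is what keeps $\lambda_n$ comfortably below $g(w^*)m_n$ and produces the polynomial bound on the product; Assumption~\eqref{eq:assknmu} plays the complementary role on the expectation side, guaranteeing sufficient decay even when the tail $\P{g(W)>k_n}$ is only borderline summable.
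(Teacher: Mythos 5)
Your proof follows essentially the same route as the paper's: the same choice $\lambda_n=\delta\log(n)\mu_n^{-1}$, the same use of the three conditions of Assumption~\ref{ass:s} to obtain $\lambda_n=o(\inf_{i\geq n}f(i,w^*))$, the same split of the expectation on the event $\{g(W)>k_n\}$, and the same truncation of the sum $\sum_{i=0}^{n-1}1/(s(i)+\lambda_n/k_n)$ at the index $a_n=(\lambda_n/k_n)^{1/\beta}$ so that~\eqref{eq:assknmu} delivers the decay. Two steps need patching. First, the displayed inequality $\prod_{i=0}^{n-1}\frac{k_ns(i)}{k_ns(i)+\lambda_n}\leq\exp\bigl(-\frac{\lambda_n g(w^*)}{2k_n}(\mu_{j_n}-\mu_n)\bigr)$ is not justified as stated: $s$ is not assumed monotone, so $k_ns(i)\geq\lambda_n$ need not hold for \emph{all} $i\geq j_n$ (e.g.\ in Case (iv) of Theorem~\ref{thrm:examples} the function $s$ dips at perfect squares), and $\log(1+x)\geq x/2$ fails for large $x$. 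The fix is exactly the reduction you perform in the next line anyway: discard all factors with $i<a_n$ and apply $\log(1+x)\geq x/2$ only for $i\geq a_n$, where $s(i)\geq Mi^\beta\geq M\lambda_n/k_n$ guarantees $\lambda_n/(k_ns(i))\leq 1/M\leq 1$; this yields the bound with $\mu_{a_n}$ in place of $\mu_{j_n}$, which is all you use afterwards. Second, your cruder bound $n^{2\delta}$ on the product factor (the paper gets $n^{\delta+o(1)}$ by exploiting $\lambda_n=o(\inf_{i\geq n}s(i))$ more fully) forces $\delta<\eps/2$, whereas the hypothesis only supplies \emph{some} $\delta\in(0,\eps)$ satisfying~\eqref{eq:assknmu}; you should note that since $\mu$ is non-increasing, \eqref{eq:assknmu} holds for every smaller $\delta$ once it holds for one, so shrinking $\delta$ below $\eps/2$ is legitimate. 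With these two one-line additions the argument is complete and matches the paper's.
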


\begin{remark}
	The conditions in Assumption~\ref{ass:s} allow us to construct a sequence $(\lambda_n)_{n\in\N}$ such that $\lambda_n<f(i,w)=g(w)s(i)$ for all $i\geq n$ and all $n$ large, whilst~\eqref{eq:assknmu} ensures that the summability condition in~\eqref{eq:star-explosive-rif} is met. \ensymboldremark
\end{remark}

\subsection{Examples} \label{sec:aplex}

To conclude the section, we present a (non-exhaustive) list of examples of functions $g$ and $s$ and vertex-weights distributions such that we can verify the conditions of Theorem~\ref{thrm:suplinpafit}. Without loss of generality, we can take $g(w)=w+1$, as one can change vertex-weight distribution accordingly for other choices of $g$. For each example, we have the following assumptions for the function $s$ and the tail distribution of the vertex-weights. With $\sigma>1$, $\kappa>0$, $\nu\in(0,1)$, $\gamma>1$, and $\alpha\in(1/2,1]$,
\be\ba 
({}&i) &&s(i)=(i+1)(\log(i+2))^\sigma, \quad  &&\P{W\geq x}=\Theta\big( \e^{-x^\kappa}\big),\\
({}&ii) && s(i)=(i+1)\log(i+2)\exp((\log\log(i+3))^\nu), \quad && \P{W\geq x}=\Theta( \exp\big(-\e^{(\log x)^\gamma}\big)\big),\\
({}&iii) && s(i)=(i+1)\log(i+2)(\log\log(i+3))^\sigma, \quad &&\P{W\geq x}=\Theta\big( \exp\big(-\e^{x^\kappa}\big)\big),\\
({}&iv) && s(i)=\begin{cases}
	i^\alpha &\mbox{if } \sqrt{i}\in\N,\\ 
	(i+1)(\log (i+2))^\sigma &\mbox{otherwise,} 
\end{cases} \quad && \P{W\geq x}=\Theta \big(\e^{-x^\kappa}\big).
\ea\ee 
We then have the following result. 

\begin{theorem}\label{thrm:examples}
	Consider the four examples $(i)$-$(iv)$. The infinite tree $T_\infty$ almost surely contains a unique infinite star and no infinite path when
	\be
	(i)\ \  (\sigma-1)\kappa>1;\qquad (ii)\ \  \nu\gamma>1;\qquad (iii)\ \ (\sigma-1)\kappa>1; \qquad (iv)\ \  (\sigma-1)\kappa>1.
	\ee
	When the above inequalities are reversed $($i.e.\ when changing the $>$ to a $<)$, almost surely $T_\infty$ contains a unique infinite path and no infinite star.
\end{theorem}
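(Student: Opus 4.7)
The strategy for each of the four examples is identical: when the displayed inequality holds, verify the hypotheses of Theorem~\ref{thrm:suplinpafit} to obtain a unique infinite star; when the reversed inequality holds, verify the hypotheses of Theorem~\ref{thrm:path} to obtain a unique infinite path. Since $g(w)=w+1$ in each case, the computations reduce to three ingredients: asymptotic estimates of $\mu_n=\sum_{i\geq n}1/s(i)$ by integral comparison; inversion of the tail $\P{W\geq x}$ to produce quantiles $k_n$ satisfying $\P{W>k_n}\leq n^{-(1+\eps)}$; and verification of the structural Assumption~\ref{ass:s} on $s$.

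A direct integral comparison yields $\mu_n\sim C_\sigma(\log n)^{-(\sigma-1)}$ in (i) and in (iv) (where the logarithmic contribution dominates the polynomial $\Theta(n^{1/2-\alpha})$ tail coming from the sparse subsequence); the substitution $v=\log\log u$ combined with a Laplace-type estimate of $\int_x^\infty\e^{-v^\nu}\,\d v$ gives $\mu_n\sim C(\log\log n)^{1-\nu}\exp(-(\log\log n)^\nu)$ in (ii); and $\mu_n\sim C_\sigma(\log\log n)^{-(\sigma-1)}$ in (iii). Inverting the given tails produces $k_n=\Theta((\log n)^{1/\kappa})$ in (i), (iii), (iv), and $k_n=\Theta(\exp((\log\log n)^{1/\gamma}))$ in (ii). In each case the product $\mu_nk_n\to 0$ precisely under the stated inequality: $(\log n)^{1/\kappa-(\sigma-1)}\to 0$ iff $(\sigma-1)\kappa>1$ in (i) and (iv); the analogue with $\log\log n$ in (iii); and $(\log\log n)^{1/\gamma}\ll(\log\log n)^\nu$ iff $\nu\gamma>1$ in (ii).

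Verifying Assumption~\ref{ass:s} is immediate in (i)--(iii): $s(n)=n\ell(n)$ for an eventually monotone slowly varying $\ell$, so any $\beta\in(0,1)$ with $p\in(1,1+\beta)$ chosen close to $1$ works, and the auxiliary inequality $s(n)\leq Cn^{1+\beta-p}(\log n)^{-1}\inf_{i\geq n}s(i)$ follows from monotonicity. Example (iv) is the delicate case: since the sparse perfect-square subsequence forces $\inf_{i\geq n}s(i)=\Theta(n^\alpha)$, one must take $\beta\in(1-\alpha,\alpha)$ (possible because $\alpha>1/2$) and $p\in(1,\alpha+\beta)$; the auxiliary inequality then reduces to $(\log n)^{\sigma+1}\leq Cn^{\alpha+\beta-p}$ on non-squares and $\log n\leq Cn^{1+\beta-p}$ on squares, both of which hold. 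Substituting these asymptotics into~\eqref{eq:assknmu}, the argument $(\delta\log n/(\mu_nk_n))^{1/\beta}$ is a fractional power of a logarithm (or iterated logarithm), so $\mu$ evaluated there contributes only a higher iterated-log factor, negligible compared to $(\mu_nk_n)^{-1}$; this confirms~\eqref{eq:assknmu} and yields the star direction via Theorem~\ref{thrm:suplinpafit}.

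For the reversed inequalities, I apply Theorem~\ref{thrm:path} with $\lambda_n:=c(\mu_n^w)^{-1}\log n$. Writing $-\log Z_n(L):=\sum_{i\geq 0}\log(1+\lambda_n/(Ls(i)))$ for $L=g(W)$ and splitting at the threshold $i_*(L)$ defined by $s(i_*)\asymp\lambda_n/L$, the head $i<i_*$ contributes $\asymp i_*$ while the tail $i\geq i_*$ Taylor-expands to $\asymp\lambda_n\mu_{i_*}/L\asymp i_*\log i_*$, giving $-\log Z_n(L)\asymp i_*(L)\log i_*(L)$. Taking expectation over $W$ then reduces to a Laplace-type saddle-point balance: the extremum $L_n$ of $L^\kappa+i_*(L)\log i_*(L)$ (or the analogue in (ii), (iii)) equalises the exponents of $\P{g(W)>L}$ and $Z_n(L)$, producing $-\log\E{Z_n}\sim(\log n)^{\sigma\kappa/(\kappa+1)}$ in (i) and (iv) (so $\E{Z_n}\geq n^{-1+o(1)}$ iff $(\sigma-1)\kappa<1$), with analogous expressions in (ii) and (iii) that reduce to $\nu\gamma<1$ and $(\sigma-1)\kappa<1$, respectively. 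The main obstacle is executing this saddle-point balance cleanly for each tail form, and in example (iv) arguing that the sparse perfect-square subsequence affects neither $\mu_n$ nor $i_*(L)$ at leading order, so that the dense part of $s$ still governs the critical inequality.
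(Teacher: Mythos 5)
Your proof of the star direction is essentially the paper's: the same $\mu_n$ asymptotics by integral comparison, the same quantile choices $k_n$, the same careful treatment of Assumption~\ref{ass:s} in case (iv) via the observation that $\inf_{i\geq n}s(i)=\Theta(n^{\alpha})$ on the perfect squares while the squares contribute only $\Theta(n^{1/2-\alpha})$ to $\mu_n$, and the same conclusion that \eqref{eq:assknmu} is governed by $\mu_nk_n\to 0$ up to an iterated-logarithmic correction from the numerator. Where you genuinely diverge from the paper is the path direction. The paper does not attack the expectation in Theorem~\ref{thrm:path} directly; it invokes an auxiliary result (\cite[Lemma~6.4]{IyerLod23}) which states that the divergence condition of Theorem~\ref{thrm:path} holds whenever $\limsup_n(\mu_n^w)^{-1}\sum_{i\geq 0}f(i,k_n)^{-1}<1$ for a sequence $k_n$ with $\P{g(W)>k_n}$ \emph{not} summable. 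Because $f$ is multiplicative this collapses to the one-line check that $\mu_nk_n\to\infty$ for the quantile $k_n$ built from the reversed inequality, which is exactly the complementary computation to the star direction. Your alternative is a direct Laplace/saddle-point evaluation of $\E{\prod_i f(i,W)/(f(i,W)+c(\mu_n^w)^{-1}\log n)}$, splitting the product at $i_*(L)$ with $s(i_*)\asymp\lambda_n/L$ and balancing $L^{\kappa}$ against $i_*(L)\log i_*(L)$; the resulting exponent $(\log n)^{\sigma\kappa/(\kappa+1)}$ does reproduce the correct threshold $(\sigma-1)\kappa<1$ (and similarly in (ii), (iii)), and for the lower bound one restricts to $\{g(W)\in[L_n,2L_n]\}$ and uses monotonicity of the product in $L$. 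So your route is sound and more self-contained, at the cost of being substantially heavier and, as you acknowledge, only sketched: the head/tail split, the uniformity in $w$ of the constant $c>1$, and the case-(iv) claim that the sparse subsequence does not perturb $i_*(L)$ would all need to be written out, whereas the paper's citation of the earlier lemma reduces each case to a single asymptotic product $\mu_nk_n$.
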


\begin{remark}
	For case $(i)$, the condition for the existence of an infinite path was already known~\cite{IyerLod23}. A sufficient condition for the existence of an infinite path was $(\sigma-1)\kappa>1+\tfrac1\kappa$, which we sharpen to $(\sigma-1)\kappa>1$ here to close the gap in the phase transition. For the other  cases, the conditions for the existence of both an infinite star and an infinite path are novel. For the existence of an infinite path we verify conditions from~\cite{IyerLod23}.  \ensymboldremark
\end{remark} 

\subsubsection{Super-linear preferential attachment without fitness }  To conclude, we consider the case $g\equiv 1$ so that $f(i,w)=s(i)$. That is, we consider a model where the evolution of the tree does not depend on the vertex-weights. Here, recent work of Iyer in~\cite{Iyer24} shows that the limiting tree $T_\infty$ almost surely contains a unique infinite star when 
\be \label{eq:Iyercond}
\sum_{i=0}^\infty \frac{1}{s(i)}<\infty\qquad\text{and}\qquad \exists \kappa >0\ \forall n\in\N_0: \max_{i\leq n}\frac{s(i)}{i+1}\leq \kappa \frac{s(n)}{n+1}.
\ee 
In the following result, we allow for functions $s$  that do not meet the second condition, thus extending the family of models for which we know a unique infinite star arises.

\begin{corollary}[Barely super-linear preferential attachment]\label{cor:bspa}
	Let $s$ satisfy~\eqref{eq:gsass} and Assumption~\ref{ass:s}, and suppose that 
	\be \label{eq:liminflb}
	\lim_{n\to\infty}\frac{\mu_{  (\log(n)\mu_n^{-1})^{1/\beta}}}{\mu_n}=\infty,
	\ee  
	where $\beta$ is as in Assumption~\ref{ass:s}. Then, the limiting infinite tree $T_\infty$ contains a unique vertex with infinite degree and no infinite path, almost surely.
\end{corollary}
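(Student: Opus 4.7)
The plan is to deduce Corollary~\ref{cor:bspa} directly from Theorem~\ref{thrm:suplinpafit} by specialising to the fitness-free case $g \equiv 1$. First, I would check that the hypotheses of Theorem~\ref{thrm:suplinpafit} are met under those of the corollary. The multiplicative structure $f(i,w) = g(w)s(i) = s(i)$ is immediate; Assumption~\ref{ass:s} on $s$ holds by hypothesis; and~\eqref{eq:gsass} is valid for any choice of $w^{*} \in \R_{+}$, since $g(w^{*}) = 1 > 0$ and $s$ has summable reciprocals by assumption.

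Next, because $g(W) \equiv 1$ is deterministic, taking $k_{n} \equiv 1$ gives $\P{g(W) > k_{n}} = 0$ for every $n$, so the tail condition in Theorem~\ref{thrm:suplinpafit} holds trivially for any $\eps > 0$; I would fix $\eps = 2$ for concreteness. It then remains to exhibit $\delta \in (0, 2)$ for which condition~\eqref{eq:assknmu} holds. With $k_{n} \equiv 1$, this requirement reduces to
\[
\lim_{n \to \infty} \frac{\mu_{(\delta\log(n)/\mu_{n})^{1/\beta}}}{\mu_{n}} = \infty.
\]
Because $x \mapsto \mu_{x}$ is non-increasing on $\R_{+}$ (as a tail of a convergent series of positive terms, extended by linear interpolation), the choice $\delta = 1 \in (0, 2)$ turns the displayed limit into exactly the hypothesis~\eqref{eq:liminflb}. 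Applying Theorem~\ref{thrm:suplinpafit} then concludes the argument, providing the existence of a unique infinite star and the absence of an infinite path in $T_{\infty}$.

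The proof is essentially a direct specialisation of a more general theorem, so there is no substantive obstacle; the only things to verify are that the fitness tail condition becomes vacuous when $g \equiv 1$, and that the monotonicity of $\mu_{x}$ immediately reduces~\eqref{eq:assknmu} to the simpler form~\eqref{eq:liminflb} appearing in the corollary.
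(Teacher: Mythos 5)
Your proposal is correct and matches the paper's own route: the paper likewise deduces the corollary by applying Theorem~\ref{thrm:suplinpafit} with $g\equiv 1$ and $k_n\equiv 1$, under which the tail condition on $g(W)$ is vacuous and~\eqref{eq:assknmu} reduces to~\eqref{eq:liminflb}. The only cosmetic remark is that with $\delta=1$ the reduced condition is literally identical to~\eqref{eq:liminflb}, so the appeal to monotonicity of $x\mapsto\mu_x$ is not even needed.
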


\begin{remark}\label{rem:bspaproof}
	The condition in~\eqref{eq:liminflb} implies the result by applying Theorem~\ref{thrm:suplinpafit} to $g\equiv 1$.  \ensymboldremark
\end{remark}

Though not entirely general, the conditions in Corollary~\ref{cor:bspa} encompass a large class of functions $s$. The four cases in Theorem~\ref{thrm:examples} (ignoring the condition on the vertex-weights) satisfy the conditions, for example. In fact, we believe that the condition in~\eqref{eq:liminflb} is satisfied by \emph{any} function $s$ that satisfies Assumption~\ref{ass:s}. As we were unable to prove this, we included it as a condition in the Corollary (as well as in Theorem~\ref{thrm:suplinpafit}).

Corollary~\ref{cor:bspa} provides, to some extent, more general conditions under which $T_\infty$ contains a unique infinite star compared to~\eqref{eq:Iyercond}. Indeed, we can construct examples such that the second condition in~\eqref{eq:Iyercond} is not satisfied, but which do satisfy the weaker conditions in Corollary~\ref{cor:bspa} (see Case  (iv) in Section~\ref{sec:examplesproofs}). At the same time, the conditions in~\eqref{eq:Iyercond} can deal with a range of functions $s$ that are not within the range of Corollary~\ref{cor:bspa}.

\section{Proofs of main results}\label{sec:proofmain}

This section is dedicated to proving the results presented in Section~\ref{sec:results}. In Section~\ref{sec:mainthrmproof} we focus on the main result; Theorem~\ref{thm:star}. Section~\ref{sec:bspaproofs} is dedicated to proving Theorem~\ref{thrm:suplinpafit}. 

\subsection{Proof of Theorem~\ref{thm:star}}\label{sec:mainthrmproof} 

We introduce the following terminology, used in the remainder of the section. Recall $\cP_k$ and $\cP$ from~\eqref{eq:P}. For $a, b \in \cU_{\infty}$ we say that 
\[
\text{``$a$ has at least $k$ children before $b$ explodes''}\quad \text{if}\quad \ \mathcal{B}(a) + \mathcal{P}_{k}(a) < \mathcal{B}(b) + \mathcal{P}(b).
\]
We also say that 
\[
\text{``$a$ explodes before all of its ancestors''}\quad \text{if}\quad \ \forall \ell<|a|: \mathcal{B}(a) + \mathcal{P}(a) < \mathcal{B}(a_{|_\ell}) + \mathcal{P}(a_{|_\ell})
\]
Finally, for $a \in \cU_{\infty}$ with $|a| \geq 1$, we say that $a= a_{1} \cdots a_{m}$ is $a_{1}$-\emph{conservative} if, for each $j \in \left\{2, \ldots, m\right\}$, we have $a_{j} \leq a_{1}$. We then have the following result.

\begin{lemma} \label{lem:cons-bound-gen}
	Under Assumption~\ref{ass:star}, there exist $\zeta < 1$ and $K = K(\zeta) > 0$ such that for all $a_1 > K(\zeta)$ and all integers $m\in \mathbb{N}$,
	\be
	\sum_{\substack{a:|a| = m\\ a\text{ is $a_1$-conservative}}}\!\!\!\!\!\!\!\!\!\!\!\!\! \P{a \text{ has at least $a_1$ children before } \varnothing \text{ explodes}} \leq \zeta^{m-1} \cM_{\lambda_{a_1}}(Y_{a_1})\cL_{\lambda_{a_1}}(\mathcal{P}_{a_{1}}).
	\ee 
\end{lemma}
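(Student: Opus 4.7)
The plan is to apply an exponential Markov inequality to each probability, factorise using independence across distinct individuals, sum over the conservative class to obtain a geometric structure, and reduce the whole estimate to a single per-level sum that must be shown to lie strictly below $1$.

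\textbf{Step 1 (reformulation).} For $a = a_1 a_2\cdots a_m$, unpacking
\[
\mathcal{B}(a) \;=\; \mathcal{P}_{a_1}(\varnothing) + \sum_{\ell=2}^{m} \mathcal{P}_{a_\ell}(a_{|_{\ell-1}}),
\qquad
\mathcal{P}(\varnothing) \;=\; \mathcal{P}_{a_1}(\varnothing) + \sum_{j=a_1+1}^{\infty} X(j),
\]
the event ``$a$ has at least $a_1$ children before $\varnothing$ explodes'' reduces, after cancellation of $\mathcal{P}_{a_1}(\varnothing)$, to
\[
E_a \;:=\; \Bigl\{\, \sum_{\ell=2}^{m} \mathcal{P}_{a_\ell}(a_{|_{\ell-1}}) + \mathcal{P}_{a_1}(a) \;<\; \sum_{j>a_1} X(\varnothing j) \,\Bigr\}.
\]
The three groups of variables on the two sides involve the disjoint labels $\{a_{|_1},\ldots,a_{|_{m-1}}\}$, $\{a\}$ and $\{\varnothing\}$, so by the standing i.i.d.\ assumption on $((X(uj))_j, W_u)$ across $u$ they are mutually independent.

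\textbf{Step 2 (Chernoff and sum over conservative $a$).} Setting $\lambda := \lambda_{a_1}$, the exponential Markov inequality together with the independence above, Condition~\ref{item:stardom} (via monotonicity of the MGF under stochastic dominance, applied after conditioning on $W_\varnothing$), and the definitions of $\mathcal{M}$ and $\mathcal{L}$ give
\[
\mathbb{P}(E_a) \;\leq\; \mathcal{M}_{\lambda}(Y_{a_1})\,\mathbb{E}[\mathcal{L}_{\lambda}(\mathcal{P}_{a_1};W)]\,\prod_{\ell=2}^{m} \mathbb{E}[\mathcal{L}_{\lambda}(\mathcal{P}_{a_\ell};W)].
\]
An $a_1$-conservative tuple of length $m$ with first coordinate $a_1$ is parametrised by $(a_2,\ldots,a_m)\in\{1,\ldots,a_1\}^{m-1}$, and the right-hand side factorises over the $a_\ell$'s, so
\[
\sum_{\substack{a:|a|=m\\ a\text{ is }a_1\text{-conservative}}} \mathbb{P}(E_a) \;\leq\; \mathcal{M}_{\lambda}(Y_{a_1})\,\mathbb{E}[\mathcal{L}_{\lambda}(\mathcal{P}_{a_1};W)]\,\Sigma(a_1)^{m-1}, \qquad \Sigma(a_1) := \sum_{k=1}^{a_1} \mathbb{E}[\mathcal{L}_{\lambda_{a_1}}(\mathcal{P}_k;W)].
\]
It thus suffices to exhibit $\zeta<1$ and $K(\zeta)$ with $\Sigma(a_1) \leq \zeta$ whenever $a_1 > K(\zeta)$.

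\textbf{Step 3 (the per-level bound; main obstacle).} Bounding $\Sigma(a_1)$ is the heart of the argument. I would split the sum at a fixed threshold $K_0$. For the head $k \leq K_0$, the pointwise monotone convergence $\mathcal{L}_{\lambda}(\mathcal{P}_k;W) \searrow \mathbb{P}(\mathcal{P}_k=0\,|\,W)$ as $\lambda\to\infty$, combined with the inclusion $\{\mathcal{P}_k=0\} \subseteq \{X(k)=0\}$ and Condition~\ref{item:starnonzero} in the form $\sum_{k\geq 1}\mathbb{P}(X(k)=0) \leq \mathbb{E}[\sup\{k : X(k)=0\}] < 1$, shows that the head contribution is at most $1-\eta$ for some $\eta>0$ and all $a_1$ sufficiently large. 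For the tail $K_0 < k \leq a_1$, the product representation $\mathcal{L}_{\lambda}(\mathcal{P}_k;W) = \prod_{j=1}^k \mathcal{L}_{\lambda}(X(j);W)$ makes $k\mapsto \mathcal{L}_\lambda(\mathcal{P}_k;W)$ monotonically decreasing in $k$; reducing this tail to at most $\eta/2$ requires exploiting the summability in Condition~\ref{item:starlimsup} together with the precise rate at which $\lambda_{a_1}\to\infty$ to compensate the $O(a_1)$ number of terms, which is exactly what the summability of $\mathcal{M}_{\lambda_n}(Y_n)\mathbb{E}[\mathcal{L}_{\lambda_n}(\mathcal{P}_n;W)]$ is designed to provide. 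Choosing $\zeta := 1-\eta/2$ and $K(\zeta)$ sufficiently large then completes the proof.
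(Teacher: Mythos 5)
Your Steps 1 and 2 reproduce the paper's argument exactly: the same cancellation of $\mathcal{P}_{a_1}(\varnothing)$, the same Chernoff/stochastic-domination bound yielding $\mathcal{M}_{\lambda_{a_1}}(Y_{a_1})\,\E{\mathcal{L}_{\lambda_{a_1}}(\mathcal{P}_{a_1};W)}\prod_{\ell=2}^m\E{\mathcal{L}_{\lambda_{a_1}}(\mathcal{P}_{a_\ell};W)}$, and the same factorisation of the sum over $\{1,\dots,a_1\}^{m-1}$ into $\Sigma(a_1)^{m-1}$. Your treatment of the head $k\le K_0$ in Step 3 is also correct, and in fact slightly cleaner than the paper's: the chain $\{\mathcal{P}_k=0\}\subseteq\{X(k)=0\}$ together with $\sum_k\P{X(k)=0}\le\E{\sup\{k:X(k)=0\}}<1$ is a tidy way to get the head below $1-\eta$.

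The gap is in the tail $K_0<k\le a_1$, which you yourself flag as the main obstacle and then only assert. Moreover, the mechanism you describe --- using ``the precise rate at which $\lambda_{a_1}\to\infty$ to compensate the $O(a_1)$ number of terms'' --- is not how this estimate works; no rate comparison between $\lambda_{a_1}$ and $a_1$ is needed. The actual argument is a one-line termwise domination: since $(\lambda_n)_{n\in\N}$ is increasing and $\lambda\mapsto\mathcal{L}_\lambda(\mathcal{P}_k;W)$ is decreasing, for every $k\le a_1$ one has $\E{\mathcal{L}_{\lambda_{a_1}}(\mathcal{P}_k;W)}\le\E{\mathcal{L}_{\lambda_k}(\mathcal{P}_k;W)}$, whence
\begin{equation}
\sum_{k=K_0+1}^{a_1}\E{\mathcal{L}_{\lambda_{a_1}}(\mathcal{P}_k;W)}\;\le\;\sum_{k=K_0+1}^{\infty}\E{\mathcal{L}_{\lambda_k}(\mathcal{P}_k;W)}.
\end{equation}
The right-hand side is the tail of a convergent series because $\mathcal{M}_{\lambda_n}(Y_n)\ge 1$ (as $\lambda_n>0$ and $Y_n\ge0$), so Condition~\ref{item:starlimsup} already implies $\sum_{n}\E{\mathcal{L}_{\lambda_n}(\mathcal{P}_n;W)}<\infty$; choosing $K_0$ large makes this tail smaller than $\eta/2$, uniformly in $a_1$. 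Note that your product representation $\mathcal{L}_\lambda(\mathcal{P}_k;W)=\prod_{j=1}^k\mathcal{L}_\lambda(X(j);W)$ and the monotonicity in $k$ it provides play no role here and would not suffice on their own. With the displayed bound inserted, your proof closes and coincides with the paper's.
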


\begin{proof}
	Suppose that $a = a_1 \cdots a_m \in \cU_\infty$. For $a$ to have at least $a_1$ children before the explosion of $\varnothing$, all the ancestors of $a$ (including $a$) need to be born, and then $a$ needs to produce $a_1$ many children. All this needs to happen before $\varnothing$ produces infinitely many children, starting to count from its $(a_1+1)^{\mathrm{st}}$ child. That is, 
	\be  \label{eq:root-prob}
	\P{a \text{ has at least $a_1$ children before } \varnothing \text{ explodes}}
	= \mathbb P \bigg(\!\cP_{a_1}(a)+\!\sum_{j=2}^m\mathcal{P}_{a_j}(a_{|_{j-1}}) \leq\!\!\!\!\sum_{k= a_{1} + 1}^{\infty}\!\!\!\! X(k)\!\bigg).
	\ee 
	Using Assumption~\ref{ass:star}~\ref{item:stardom} and a Chernoff bound, with $\lambda_{a_1}>0$, we arrive at the upper bound
	\be 
	\mathbb P \bigg(\!\cP_{a_1}(a)+\!\sum_{j=2}^m\mathcal{P}_{a_j}(a_{|_{j-1}}) \leq Y_{a_1}\!\bigg)\leq \cM(\lambda_{a_1}(Y_{a_1}))\mathcal{L}_{\lambda_{a_1}}(\mathcal{P}_{a_1})\prod_{j=2}^m \mathcal{L}_{\lambda_{a_1}}(\mathcal{P}_{a_j}), 
	\ee
	where the last line follows from the fact that the sequence $(\mathcal{P}_{j}(u))_{j\in \mathbb{N}}$ is independent and distributed like $(\mathcal{P}_{j}(\varnothing))_{j\in \mathbb{N}}$ for any $u\in\cU_\infty$. When we sum over the possible conservative sequences $a$ that are $a_1$-conservative, each $a_{j}$ takes values between $1$ and $a_1$, for $j=2,\ldots, m$. Thus,
	\be\ba \label{eq:sumbound}
	\sum_{\substack{a:|a| = m\\ a\text{ $a_1$-conservative}}}\!\!\!\!\!\!\!\!\!\!\!{}&\P{a \text{ has at least $a_1$ children before } \varnothing \text{ explodes}}
	\\ \leq{}& \sum_{a_{2} = 1}^{a_{1}} \sum_{a_{3} = 1}^{a_{1}} \cdots \sum_{a_{m}=1}^{a_1}  \cM(\lambda_{a_1}(Y_{a_1}))\mathcal{L}_{\lambda_{a_1}}(\mathcal{P}_{a_1})\prod_{j=2}^m \mathcal{L}_{\lambda_{a_1}}(\mathcal{P}_{a_j})		\\ 
	= {}&  \cM(\lambda_{a_1}(Y_{a_1}))\mathcal{L}_{\lambda_{a_1}}(\mathcal{P}_{a_1}) \left(\sum_{n = 1}^{a_{1}} \mathcal{L}_{\lambda_{a_1}}(\mathcal{P}_{n}) \right)^{m-1}. 
	\ea\ee 
	It thus remains to show that, for $a_1$ sufficiently large,
	\[
	\sum_{n=1}^{a_1}\mathcal{L}_{\lambda_{a_1}}(\mathcal{P}_n) < \zeta. 
	\]
	As $\lambda_n>0$ for all $n\in\N$, we have $\cM_{\lambda_n}(Y_n)\geq 1$ for all $n\in\N$. Hence, by Assumption~\ref{ass:star}~\ref{item:starlimsup},
	\be 
	\sum_{n=1}^\infty \cL_{\lambda_n}(\cP_n)<\infty
	\ee 
	also holds.
	As a result, for any $\eta>0$ there exists $N = N(\eta)\in\N $ such that for all $a_1 > N$,
	\begin{equation} \label{eq:bound-part-1}
		\sum_{n=N}^{a_1} \mathcal{L}_{\lambda_{a_1}}(\mathcal{P}_n) < \sum_{n=N}^{\infty} \mathcal{L}_{\lambda_n}(\mathcal{P}_n) < \frac{\eta}{2},
	\end{equation}
	where the inequality uses the fact that $\lambda_n$ is increasing in $n$. On the other hand, since $\lambda_n$ diverges with $n$, bounded convergence (bounding the integrand by $1$) yields
	\be\label{eq:domconv}
	\lim_{a_1 \to \infty} \sum_{n = 1}^{N-1}\mathcal{L}_{\lambda_{a_1}}(\mathcal{P}_n) =\sum_{n=1}^{N-1} \P{\cP_n=0}=\sum_{n=1}^{N-1}\P{X(1)=\cdots =X(n)=0}.
	\ee
	We note that, by Assumption~\ref{ass:star}\ref{item:starnonzero}, there exists $\xi>0$ such that 
	\be 
	\E{\sup\{k:X(1)=\cdots= X(k)=0\}}=\sum_{n=1}^\infty \P{X(1)=\cdots =X(n)=0}<1-\xi.  
	\ee 
	Hence, the right-hand side of~\eqref{eq:domconv} is at most $1-\xi$ for any $N\in\N$. So, we first take $\eta<\xi$ and  $N$ large enough so that we have the upper bound in~\eqref{eq:bound-part-1}. Then, we take $K\geq N$ sufficiently large, so that for all $a_1\geq K$, 
	\begin{equation} \label{eq:bound-part-2}
		\sum_{n = 1}^{N-1} \mathcal{L}_{\lambda_{a_1}}(\mathcal{P}_{\ell}) < 1-\xi/2.
	\end{equation}
	Combined, we thus arrive at 
	\be 
	\sum_{n=1}^{a_1}\mathcal{L}_{\lambda_{a_1}}(\mathcal{P}_{\ell})<1-\xi/2=:\zeta.
	\ee 
	Using this in~\eqref{eq:sumbound}, we conclude the proof.  
\end{proof}

\noindent The above lemma provides an upper bound for the probability of the event that a vertex $a$ explodes before the root of $\cU_\infty$, in the case that $a$ is $a_1$-\emph{conservative}. When $a$ does not satisfy this condition, we can view $a$ as a concatenation of a number of conservative sequences. That is, we write $a=\overline b_1\cdots \overline b_\ell$, where $\overline b_i=b_{i,1}\ldots b_{i,m_i}$ for each $i\in[\ell]$ and for some $\ell\in\N, (m_i)_{i\in[\ell]}\in \N^\ell$, and $(b_{i,j})_{i\in[\ell],j\in[m_i]}$, such that $\overline b_i$ is $b_{i,1}$-conservative for each $i\in[\ell]$. By the independence of birth processes of distinct individuals (or in fact, the independence of disjoint sub-trees), we are able to apply Lemma~\ref{lem:cons-bound-gen} to each conservative sequence in the concatenation to arrive at a bound for the expected number of individuals that explode before all their ancestors.

\begin{proposition} \label{prop:local-explosions}
	Under Assumption~\ref{ass:star}, there exists $K'>0$ sufficiently large, such that 
	\be 
	\E{\left| \left\{a\in \cU_\infty: a_1>K', a\text{ explodes before all its ancestors} \right\} \right|}<\infty.
	\ee 
\end{proposition}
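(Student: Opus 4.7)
The plan is to reduce the general, possibly non-conservative, case to the conservative one already handled by Lemma~\ref{lem:cons-bound-gen}. Given $a = a_1 \cdots a_m \in \cU_\infty$, I decompose it into maximal conservative blocks $a = \bar{b}_1 \bar{b}_2 \cdots \bar{b}_\ell$, where $\bar{b}_i = b_{i,1} b_{i,2} \cdots b_{i,m_i}$ is $b_{i,1}$-conservative; maximality of the decomposition forces the leading indices to be strictly increasing, $b_{1,1} < b_{2,1} < \cdots < b_{\ell,1}$. Write $v_i := \bar{b}_1 \cdots \bar{b}_i$ for the endpoint of block $i$, with the conventions $v_0 = \varnothing$ and $v_\ell = a$. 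A first moment bound then reduces the expectation to a sum over all such decompositions with $b_{1,1} > K'$.

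The heart of the argument (and the main obstacle) is the following implication: if $a$ explodes before all its ancestors, then for every $i \in \{1, \ldots, \ell\}$ the per-block event
\[
E_i := \{\mathcal{B}(v_i) + \cP_{b_{i,1}}(v_i) < \mathcal{B}(v_{i-1}) + \cP(v_{i-1})\}
\]
(``$v_i$ has at least $b_{i,1}$ children before $v_{i-1}$ explodes'') holds. For $i = \ell$ this is immediate from $\cP_{b_{\ell,1}}(a) \leq \cP(a)$. For $1 \leq i < \ell$, the strict inequality $b_{i+1,1} > b_{i,1}$ coming from maximality gives the monotonicity $\cP_{b_{i+1,1}}(v_i) \geq \cP_{b_{i,1}}(v_i)$; combining this with $\mathcal{B}(v_{i+1}) \geq \mathcal{B}(v_i) + \cP_{b_{i+1,1}}(v_i)$, $\mathcal{B}(v_{i+1}) \leq \mathcal{B}(a)$, and $\mathcal{B}(a) + \cP(a) < \mathcal{B}(v_{i-1}) + \cP(v_{i-1})$ yields $E_i$.

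Next I verify that the events $(E_i)_{i=1}^\ell$ depend on disjoint collections of inter-birth variables, so by independence of distinct individuals' offspring processes $\P{\bigcap_i E_i} = \prod_i \P{E_i}$. Subtracting $\cP_{b_{i,1}}(v_{i-1})$ from both sides of $E_i$ rewrites it as
\[
\cP_{b_{i,1}}(v_i) + \sum_{j=2}^{m_i} \cP_{b_{i,j}}\!\left(v_{i-1} b_{i,1} \cdots b_{i,j-1}\right) \leq \sum_{k > b_{i,1}} X(v_{i-1}, k);
\]
internal block-$i$ nodes are disjoint across blocks, and the only potentially shared vertex between consecutive blocks is $v_i$, used by $E_i$ through its first $b_{i,1}$ offspring and by $E_{i+1}$ through offspring of index larger than $b_{i+1,1} > b_{i,1}$, hence on disjoint coordinates. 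The Chernoff argument from the proof of Lemma~\ref{lem:cons-bound-gen} together with Assumption~\ref{ass:star}\ref{item:stardom} then gives, exactly as in that proof,
\[
\P{E_i} \leq \cM_{\lambda_{b_{i,1}}}(Y_{b_{i,1}})\, \E{\cL_{\lambda_{b_{i,1}}}(\cP_{b_{i,1}}; W)} \prod_{j=2}^{m_i} \E{\cL_{\lambda_{b_{i,1}}}(\cP_{b_{i,j}}; W)}.
\]

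It remains to sum. Choosing $K' \geq K(\zeta)$ from Lemma~\ref{lem:cons-bound-gen}, summing over $(b_{i,j})_{j \geq 2} \in \{1, \ldots, b_{i,1}\}^{m_i - 1}$ and then over $m_i \geq 1$ contributes at most $(1-\zeta)^{-1}$ per block, exactly as in the proof of Lemma~\ref{lem:cons-bound-gen}. Setting $c_n := \cM_{\lambda_n}(Y_n)\, \E{\cL_{\lambda_n}(\cP_n; W)}$, which is summable by Assumption~\ref{ass:star}\ref{item:starlimsup}, the remaining sum over strictly increasing $b_{1,1} < \cdots < b_{\ell,1}$ with $b_{1,1} > K'$ of $\prod_i c_{b_{i,1}}$ is at most $\tfrac{1}{\ell!}\bigl(\sum_{n > K'} c_n\bigr)^\ell$, and summing over $\ell \geq 1$ produces the finite quantity $\exp\!\bigl((1-\zeta)^{-1} \sum_{n > K'} c_n\bigr) - 1$, which proves the proposition. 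The crucial step is the implication in the second paragraph: recognising that maximality of the block decomposition forces the leading indices to strictly increase is what powers both the comparison of $\cP_{b_{i,1}}(v_i)$ with $\cP_{b_{i+1,1}}(v_i)$ defining $E_i$, and the disjointness-of-coordinates ensuring independence across blocks.
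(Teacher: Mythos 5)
Your overall architecture is the same as the paper's: decompose $a$ into maximal conservative blocks (equivalently, blocks delimited by the running maxima of $a$), show that the event that $a$ explodes before all its ancestors is contained in the intersection of the per-block events $E_i$, bound each block's contribution via Lemma~\ref{lem:cons-bound-gen}, and sum over decompositions. Your inclusion argument in the second paragraph is correct and matches the paper's, and your final summation is in fact slightly cleaner: exploiting the strict monotonicity of the leading indices to extract a factor $1/\ell!$ lets you conclude for any finite value of $\sum_{n>K'}\cM_{\lambda_n}(Y_n)\E{\cL_{\lambda_n}(\cP_n;W)}$, whereas the paper bounds the sum over increasing maxima by the sum over all tuples and compensates with a binomial coefficient together with a smallness requirement on that tail sum.

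There is, however, a genuine gap in your third paragraph: the identity $\P{\bigcap_i E_i}=\prod_i\P{E_i}$ is false in general. You correctly observe that $E_i$ and $E_{i+1}$ touch the shared vertex $v_i$ only through disjoint coordinates of its offspring sequence ($X(v_ij)$ for $j\le b_{i,1}$ versus $j>b_{i+1,1}$), but disjoint coordinates of the \emph{same} individual's offspring process are only \emph{conditionally} independent given that individual's weight $W_{v_i}$; unconditionally they are correlated through $W_{v_i}$. For instance, in the preferential attachment application all the $X(v_ij)$ share the rate parameter $g(W_{v_i})$, so the events $\{\cP_{b_{i,1}}(v_i)\text{ small}\}$ and $\{\sum_{k>b_{i+1,1}}X(v_ik)\text{ large}\}$ are negatively associated. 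The paper flags exactly this point and repairs it in the only order that works: first condition on all the weights along $a$, factorize the \emph{conditional} probability using conditional independence, then invoke the uniform domination of Assumption~\ref{ass:star}~\ref{item:stardom} (valid for \emph{every} weight value) to replace each right-hand side $\sum_{k>b_{i,1}}X(v_{i-1}k)$ by an independent copy of $Y_{b_{i,1}}$ that no longer depends on the weights; the modified events $\wt E_i$ are then genuinely independent, and taking expectations yields $\P{\bigcap_i E_i}\le\prod_i\P{\wt E_i}$. Your subsequent per-block Chernoff bound is precisely the bound on $\P{\wt E_i}$, so the rest of your argument survives once the factorization step is corrected --- but as written, the ordering ``factorize exactly, then dominate each factor'' does not go through; it must be ``condition, dominate, then factorize''.
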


\begin{proof}
	As explained before the proposition statement, we think of sequences $a\in\cU_\infty$ as a concatenation of conservative sequences. Let $a=a_1\ldots a_m$ be a sequence of length $m\in\N$, and assume that there exist $k\in[m]$ and indices $1:=I_1<I_2<\ldots<I_k$ such that $a_1=:a_{I_1}<a_{I_2}<\ldots <a_{I_k}$ and $a_i\leq a_{I_j}$ for all $i\in \{I_j+1,\ldots, I_{j+1}-1\}$ and $j\in [k]$ (with $I_{k+1}:=m+1$). That is, the $I_j$ are the indices of the running maxima of $a$. We also set $I_{k+1} := m + 1$ and $a_{0} = \varnothing$. We think of $a$ as a concatenation of the conservative sequences $a_{I_j}\cdots a_{I_{j+1}-1}$, with $j\in[k]$. These sub-sequences can be seen as corresponding to an $a_{I_j}$-conservative individual, rooted at $a_{I_j-1}$, for each $j\in[k]$. We can thus apply Lemma~\ref{lem:cons-bound-gen} to  these sub-sequences. 
	
	Since, by definition we have $a_{I_{\ell +1}} >  a_{I_{\ell}}$, applying a similar logic to~\eqref{eq:root-prob}, we have the inclusion
	\begin{equation}
		\begin{aligned} \label{eq:ea}
			E_a:={}&\{a\text{ explodes}\text{ before any of its ancestors explodes}\}\\
			\subseteq{}&\bigcap_{\ell=1}^{k}\{a_1\cdots a_{I_{\ell+1}-1} \text{ gives birth to at least } a_{I_{\ell}}\text{ children before }a_1\cdots a_{I_{\ell}-1}\text{ explodes}\}
			\\  ={}& \bigcap_{\ell=1}^{k}\bigg\{ \mathcal{P}_{a_{I_{\ell}}}(a_{|_{I_{\ell+1}-1}})+\sum_{j=I_{\ell}}^{I_{\ell+1}-2}\mathcal{P}_{a_{j+1}}(a_{|_j}) \leq \sum_{i= a_{I_{\ell}} + 1}^{\infty} X(a_1 \cdots a_{I_{\ell}-1} i)\bigg\}
			=:\bigcap_{\ell=1}^k E_{a,\ell}.
		\end{aligned}
	\end{equation}
	Now, note that the events $(E_{a, \ell}, \ell \in [k])$ are not independent, since, for a given $\ell$, the term $\mathcal{P}_{a_{I_{\ell}}}(a_{|_{I_{\ell+1}-1}})$ in $E_{a, \ell}$ may be correlated with the summands $ X(a_1 \cdots a_{I_{\ell+1}-1} i)$ appearing in $E_{a, \ell+1}$ (via the weight $W_{a_{I_{\ell+1}-1}}$). However, as we assume that, for any $u\in \cU_\infty$, the random variables $(X(uj))_{j\in\N}$, conditionally on $W_u$, are independent, it follows that these events are conditionally independent, given the weights of $a$ and all its ancestors, $W_\varnothing, W_{a_1},W_{a_1a_2},\ldots, W_a$. Thus, 
	\begin{equation}
		\begin{aligned}
			\mathbb P{}&(E_a \, | \, W_\varnothing, W_{a_1},W_{a_1a_2},\ldots, W_a) 
			\\ & \leq \prod_{\ell=1}^{k} \mathbb P \bigg( \mathcal{P}_{a_{I_{\ell}}}(a_{|_{I_{\ell+1}-1}})+\sum_{j=I_{\ell}}^{I_{\ell+1}-2}\mathcal{P}_{a_{j+1}}(a_{|_j}) \leq \!\!\!\!\sum_{i= a_{I_{\ell}} + 1}^{\infty}\!\!\!\! X(a_1 \cdots a_{I_{\ell}-1} i) \, \bigg | \, W_\varnothing, W_{a_1},W_{a_1a_2},\ldots, W_a\bigg)
			\\ &  \stackrel{\eqref{eq:stochastic-bound}}{\leq} \prod_{\ell=1}^{k} \mathbb P\bigg( \mathcal{P}_{a_{I_{\ell}}}(a_{|_{I_{\ell+1}-1}})+\sum_{j=I_{\ell}}^{I_{\ell+1}-2}\mathcal{P}_{a_{j+1}}(a_{|_j}) \leq Y^{(a_1 \cdots a_{I_{\ell}-1})}_{a_{I_{\ell}}} \, \bigg | \, W_\varnothing, W_{a_1},W_{a_1a_2},\ldots, W_a\bigg)
			\\ &  = \prod_{\ell=1}^{k} \P{\wt E_{a, \ell} \, \Big| \, W_\varnothing, W_{a_1},W_{a_1a_2},\ldots, W_a}, 
		\end{aligned}
	\end{equation}
	where each $Y^{(a_1 \cdots a_{I_{\ell}-1})}_{a_{I_{\ell}}}$ is independent and distributed like $Y_{a_{I_{\ell}}}$ and does depend on the vertex-weights. Now, each of the events $ \wt E_{a, \ell}$ are independent, as they depend on different weights. Hence, so are each of the terms appearing in the product, so that taking expectations on both sides yields
	\begin{equation} \label{eq:probsplitineq}
		\P{E_a}\leq \prod_{\ell=1}^k \P{\wt E_{a,\ell}}.    
	\end{equation}
	We now let $d_j:=I_{j+1}-I_j-1$ for $j\in[k-1]$ and $d_k:=m-I_k$ denote the number of entries between the running maxima of $a$. We can then define, for $(d_j)_{j\in[k]}\in\N_0^k$ (with $[0]:=\emptyset$),
	\be \ba 
	\mathscr P_k({}&a_{I_1},a_{I_2},\ldots, a_{I_k}, d_1, \ldots, d_k)\\
	&:=\{a\in \cU_\infty: \text{ For all }j\in\{1,\ldots, k\}\text{ and all }i\in[d_j],\ a_{I_j+i}\in[a_{I_j}]\}
	\ea \ee 
	as the set of all sequences $a$ with running maxima $a_1=a_{I_1},\ldots, a_{I_k}$, and $d_j$ many entries between the $j^{\text{th}}$ and $(j+1)^{\text{th}}$ maximum. For brevity, we omit the arguments of $\mathscr P_k$. We then write the expected value in the proposition statement  as
	\be\ba \label{eq:splitsum1}
	\sum_{\substack{ a\in \cU_\infty \\ a_1>K'}}\P{E_a}=\sum_{m=1}^\infty \sum_{\substack{a: |a|=m\\ a_1>K'}}\P{E_a}\leq \sum_{m=1}^\infty \sum_{k=1}^m \sum_{a_{I_k}>\ldots >a_{I_1}>K'}\sum_{\substack{(d_\ell)_{\ell\in[k]}\in \N_0^k\\ \sum_{\ell=1}^k d_\ell=m-k}}\sum_{a\in \mathscr P_k}\prod_{\ell=1}^k\P{\wt E_{a,\ell}}.
	\ea\ee 
	In the first step, we introduce a sum over all sequence lengths $m$. In the second step, we furthermore sum over the number of running maxima $k$, the values of the running maxima $a_{I_1}, \ldots a_{I_k}$, the number of entries $d_\ell$ between each maxima $I_\ell$ and $I_{\ell+1}$ (or between $I_k$ and $m$ if $\ell=m$), and all sequences $a\in\mathscr P_k$ that admit such running maxima and inter-maxima lengths. Moreover, we use~\eqref{eq:probsplitineq} to bound $\P{E_a}$ from above, now that we know the number of running maxima in $a$.
	
	We can now take the sum over $a\in\mathscr P_k$ into the product, due to the fact that we can decompose each sequence  $a\in\mathscr P_k(a_{I_1},\ldots, a_{I_k},d_1,\ldots d_k)$ into a concatenation of sequences $a^{(1)}\ldots a^{(k)}$, with $a^{(\ell)}:=a_{I_\ell}\cdots a_{I_{\ell+1}-1}\in\mathscr P_1(a_{I_\ell},d_\ell)$ for each $\ell\in[k]$. This yields,
	\be 
	\sum_{a\in\mathscr P_k}\prod_{\ell=1}^k \P{\wt E_{a,\ell}}=\prod_{\ell=1}^k\!\!\!\! \sum_{\substack{a^{(\ell)}:|a^{(\ell)}|=d_\ell+1\\ \text{$a^{(\ell)}$ $a_{I_\ell}$-conservative}}}\!\!\!\!\!\!\!\!\!\!\!\P{\wt E_{a,\ell}}.
	\ee 
	We can then directly apply Lemma~\ref{lem:cons-bound-gen} to each of the sums in the product. Indeed, as $a_{I_\ell}>a_{I_1}=a_1>K'$, we can take $K'$ large enough to obtain, for some $\zeta<1$, the upper bound 
	\be 
	\prod_{\ell=1}^k \!\!\!\!\sum_{\substack{a^{(\ell)}:|a^{(\ell)}|=d_\ell+1\\ \text{$a^{(\ell)}$ $a_{I_\ell}$-conservative}}}\!\!\!\!\!\!\!\!\!\!\!\P{\wt E_{a,\ell}}\leq \prod_{\ell=1}^k \zeta^{d_\ell}\cM_{\lambda_{a_{I_\ell}}}(Y_{a_{I_\ell}}) \cL_{\lambda_{a_{I_\ell}}}(\mathcal{P}_{a_{I_\ell}}).
	\ee 
	We can take out the factors $\zeta^{d_\ell}$ and use that the $d_\ell$ sum to $m-k$. Using this in~\eqref{eq:splitsum1} yields
	\be\ba 
	\sum_{m=1}^\infty{}& \sum_{k=1}^m \sum_{a_{I_k}>\ldots >a_{I_1}>K'}\zeta^{m-k}\sum_{\substack{(d_j)_{j\in[k]}\in \N_0^k\\ \sum_{\ell=1}^k d_\ell=m-k}}\prod_{\ell=1}^k \cM_{\lambda_{a_{I_\ell}}}(Y_{a_{I_\ell}}) \cL_{\lambda_{a_{I_\ell}}}(\mathcal{P}_{a_{I_\ell}})\\ 
	&\leq \sum_{m=1}^\infty \sum_{k=1}^m \binom{m-1}{k-1}\zeta^{m-k}\sum_{a_{I_1}>K'}\cdots \sum_{a_{I_k}>K'}\prod_{\ell=1}^k \cM_{\lambda_{a_{I_\ell}}}(Y_{a_{I_\ell}}) \cL_{\lambda_{a_{I_\ell}}}(\mathcal{P}_{a_{I_\ell}})\\
	&=\sum_{m=1}^\infty \sum_{k=1}^m \binom{m-1}{k-1}\zeta^{m-k}\left(\sum_{n> K'} \cM_{\lambda_n}(Y_n)\cL_{\lambda_n}(\mathcal{P}_n)\right)^k.
	\ea\ee 
	By Assumption~\ref{ass:star}~\ref{item:starlimsup} we can bound the innermost sum from above by $\zeta/M$ for some $M>0$ when $K'$ is sufficiently large, so that we obtain the upper bound 
	\be 
	\sum_{m=1}^\infty \sum_{k=1}^m \binom{m-1}{k-1}\zeta^mM^{-k}=\frac{\zeta}{M}\sum_{m=1}^\infty  \Big(\zeta\big(1+\tfrac1M\big)\Big)^{m-1}<\infty, 
	\ee 
	where the last step follows when $M$ is large enough such that $\zeta(1+1/M)<1$, which holds by choosing $K'$ sufficiently large, as follows from~\eqref{eq:bound-part-1}.
\end{proof}

We then state three results,  proved by the author and Iyer in~\cite{IyerLod23}. First, we introduce for $L\in \N$,
\[
\mathcal{U}_{L} := \left\{u \in \mathcal{U}_{\infty}: u_{i} \leq L \text{ for all } i \in [|u|] \right\} \cup \{\varnothing\}.
\]
Following~\cite{Oliveira-spencer}, we say elements of $\mathcal{U}_{L}$ are $L$-\emph{moderate}. We then have the following result, which tells us that the sub-tree consisting of $L$-moderate individuals does not explode, almost surely.

\begin{proposition}[\cite{IyerLod23}] \label{prop:finite-l-moderate}
	Let $(\mathscr{T}_{t})_{t \geq 0}$ be a $(X,W)$-CMJ branching process satisfying Condition~\ref{item:starnonzero} of Assumption~\ref{ass:star}.  Then, almost surely, for all $t \in (0,\infty)$, we have $|\mathscr{T}_{t}\cap \cU_L| < \infty$ almost surely.
\end{proposition}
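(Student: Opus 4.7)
The plan is to reduce the statement to showing that $\E{|\mathscr{T}_t\cap\cU_L|}<\infty$ for every fixed $t\in(0,\infty)$. Once this is established, Markov's inequality gives $|\mathscr{T}_t\cap\cU_L|<\infty$ almost surely for each fixed $t$, and since the map $t\mapsto|\mathscr{T}_t\cap\cU_L|$ is non-decreasing in $t$, taking a countable intersection over $t\in\N$ yields the claim for all $t\in(0,\infty)$.

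To bound the expectation, I would write
\be
\E{|\mathscr{T}_t\cap\cU_L|}=\sum_{u\in\cU_L}\P{\cB(u)\leq t},
\ee
and, for each $u=u_1\cdots u_m\in\cU_L$, apply a Chernoff bound with parameter $\lambda>0$ to $\cB(u)=\sum_{j=1}^m\cP_{u_j}(u_{|_{j-1}})$. The inter-birth sequences $(X(u_{|_{j-1}}i))_{i\in\N}$ belong to distinct ancestors $u_{|_{j-1}}$ and are therefore mutually independent, which allows the exponential moment to factorise as
\be
\E{\e^{-\lambda\cB(u)}}=\prod_{j=1}^m\E{\cL_\lambda(\cP_{u_j};W)}.
\ee
Summing over all $u\in\cU_L$ by first fixing $|u|=m$ and then summing each $u_j$ over $\{1,\ldots,L\}$ gives a geometric series in $m$ with ratio $q_\lambda:=\sum_{k=1}^L\E{\cL_\lambda(\cP_k;W)}$, so that $\E{|\mathscr{T}_t\cap\cU_L|}\leq \e^{\lambda t}/(1-q_\lambda)$ provided $q_\lambda<1$.

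The crux is thus to exhibit some $\lambda>0$ with $q_\lambda<1$, and this is where Condition~\ref{item:starnonzero} enters. Since $\e^{-\lambda\cP_k}\leq 1$ and $\e^{-\lambda\cP_k}\to \1{\cP_k=0}$ pointwise as $\lambda\to\infty$, bounded convergence yields
\be
\lim_{\lambda\to\infty}q_\lambda=\sum_{k=1}^L\P{\cP_k=0}=\sum_{k=1}^L\P{X(1)=\cdots=X(k)=0}\leq \E{\sup\{k:X(1)=\cdots=X(k)=0\}}\leq \E{\sup\{k:X(k)=0\}}<1,
\ee
where the final strict inequality is precisely the first half of Condition~\ref{item:starnonzero}. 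Hence we can fix $\lambda$ large enough that $q_\lambda<1$, which closes the argument.

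The only real obstacle is the last display: one must be careful about the interpretation of $\sup\{k:X(k)=0\}$ and justify the dominated-convergence step, but both are routine. Everything else is essentially bookkeeping and exploits only the independence of offspring sequences across distinct individuals, which is part of the standing assumptions on $(X,W)$-CMJ processes stated at the start of Section~\ref{sec:results}.
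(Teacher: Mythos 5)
Your proof is correct, but it takes a genuinely different route from the paper's. The paper proves this proposition by truncation and coupling: it introduces an auxiliary $(X^{(L)},W^{(L)})$-CMJ process in which $X^{(L)}(j)=\infty$ for $j>L$, so that no individual ever has more than $L$ children, observes that this truncated process has offspring intensity at most $L$ at every finite time and strictly less than one at time zero (the latter being exactly the first half of Condition~\ref{item:starnonzero}), and then invokes a general conservativeness criterion of Komj\'athy (Theorem~3.1(b) of~\cite{Komjathy2016ExplosiveCB}) to conclude that the truncated process is almost surely finite at all finite times; a natural coupling identifies its population at time $t$ with $\mathscr{T}_t\cap\cU_L$. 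You replace the appeal to that external theorem with a self-contained first-moment computation: the Chernoff bound, the factorisation of $\mathbb{E}[\e^{-\lambda\cB(u)}]$ over the distinct ancestors of $u$ (which are indeed independent by the standing i.i.d.\ assumption across individuals), and the geometric series with ratio $q_\lambda=\sum_{k=1}^L\E{\cL_\lambda(\cP_k;W)}$ are all sound, and your limit $q_\lambda\to\sum_{k=1}^L\P{\cP_k=0}\leq\E{\sup\{k:X(k)=0\}}<1$ as $\lambda\to\infty$ is precisely the mechanism the paper itself uses in the proof of Lemma~\ref{lem:cons-bound-gen} (cf.~\eqref{eq:domconv}) to obtain its contraction constant $\zeta<1$. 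Your route is more elementary, uses only the first half of Condition~\ref{item:starnonzero}, and yields the quantitative bound $\E{|\mathscr{T}_t\cap\cU_L|}\leq\e^{\lambda t}/(1-q_\lambda)$; the paper's route is shorter on the page but outsources the non-explosion step to the cited criterion. The two interpretive points you flag (the convention $\sup\emptyset=0$ and the bounded-convergence step) are handled exactly as you suggest, so there is no gap.
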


\noindent Recall that, for an $(X,W)$-Crump--Mode--Jagers branching process $(\mathscr{T}_{t})_{t \geq 0}$, we have  
\[\tau_{\infty} = \lim_{k \to \infty} \tau_{k} = \inf\left\{t > 0: |\mathscr{T}_t| = \infty\right\}.\]
Recall also that we have $\mathcal{T}_{\infty} = \bigcup_{k=1}^{\infty} \mathcal{T}_{k} = \bigcup_{k=1}^{\infty} \mathscr{T}_{\tau_{k}}$.  We then have the following result. 
\begin{lemma}[\cite{IyerLod23}] \label{lemma:tree-ident}
	Let $(\mathscr{T}_{t})_{t \geq 0}$ be  a $(X,W)$-CMJ branching process satisfying Condition~\ref{item:starnonzero} of Assumption~\ref{ass:star}. Then, almost surely, $\mathcal{T}_{\infty} = \left\{u \in \mathcal{U}_{\infty}: \mathcal{B}(u) < \tau_{\infty}\right\} \subseteq \mathscr{T}_{\tau_{\infty}}$.
\end{lemma}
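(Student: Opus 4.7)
The plan is to establish the set identity $\mathcal{T}_\infty = \{u \in \cU_\infty : \cB(u) < \tau_\infty\}$ by double inclusion; the final containment $\mathcal{T}_\infty \subseteq \mathscr{T}_{\tau_\infty}$ is then immediate, since $\mathscr{T}_{\tau_\infty} = \{u : \cB(u) \leq \tau_\infty\}$ by definition.

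For the inclusion $\{u : \cB(u) < \tau_\infty\} \subseteq \mathcal{T}_\infty$, I would argue as follows. Since $\tau_\infty$ is the monotone limit of $\tau_k$, any $u$ with $\cB(u) < \tau_\infty$ satisfies $\cB(u) < \tau_k$ for some $k$. The definition $\tau_k = \inf\{t : |\mathscr{T}_t| \geq k\}$ then gives $|\mathscr{T}_t| \leq k - 1$ for all $t < \tau_k$, so $|\{v : \cB(v) < \tau_k\}| \leq k - 1$. When $\mathscr{T}_{\tau_k}$ is ordered by birth time, these individuals appear first, $u$ is among them, and hence $u \in \mathcal{T}_k \subseteq \mathcal{T}_\infty$.

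The reverse inclusion reduces to the assertion that $\tau_k < \tau_\infty$ a.s.\ for every $k$, or equivalently that $|\mathscr{T}_{\tau_k}| < \infty$ a.s. Since $|\mathscr{T}_{\tau_k^-}| \leq k - 1$ by the previous paragraph, the problem becomes bounding the jump set $A := \{v \in \cU_\infty : \cB(v) = \tau_k\}$. I would decompose $A = A_1 \sqcup A_{\geq 2}$, where $A_1$ consists of those $v \in A$ whose parent lies in $\mathscr{T}_{\tau_k^-}$. Each of the at most $k-1$ possible parents $w$ contributes indices $j$ with $\sum_{i=1}^j X(wi) = \tau_k - \cB(w)$; the second part of Assumption~\ref{ass:star}\ref{item:starnonzero} provides $\sum_{i=n+1}^\infty X(wi) > 0$ a.s.\ for every $n$, so the partial sums strictly exceed $\tau_k - \cB(w)$ for all $j$ large enough, and only finitely many indices satisfy the equation. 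Hence $|A_1| < \infty$ a.s. The set $A_{\geq 2}$ is the disjoint union, over $v \in A_1$, of the descendants of $v$ reached through consecutive zero inter-birth times; each such sub-tree is a Bienaym\'e--Galton--Watson process whose per-generation offspring count is dominated by $\sup\{k : X(k) = 0\}$, hence subcritical by the first part of Assumption~\ref{ass:star}\ref{item:starnonzero} and a.s.\ finite.

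The main obstacle is the control of $A$: one must rule out an infinite burst of simultaneous births at the critical time $\tau_k$, and the two parts of Assumption~\ref{ass:star}\ref{item:starnonzero} are tailored precisely for this. The first rules out an infinite avalanche of instantaneous offspring from any single seed, and the second rules out accumulation of the birth times of a single parent's children at $\tau_k$. Once $|A| < \infty$ a.s.\ is secured, $|\mathscr{T}_{\tau_k}| \leq k - 1 + |A| < \infty$ a.s., so $\tau_k < \tau_\infty$ a.s.\ for every $k$, and combining the two set inclusions identifies $\mathcal{T}_\infty$ as claimed.
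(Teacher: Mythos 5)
Your proof is correct and follows essentially the same route as the paper's: both reduce the statement to showing $\tau_k < \tau_\infty$ for every $k$ (on survival) and control the set of individuals born exactly at time $\tau_k$ using the two halves of Condition~\ref{item:starnonzero} in precisely the same roles --- the tail positivity $\sum_{i=n+1}^{\infty} X(i) > 0$ rules out a single parent producing infinitely many children at that instant, and $\E{\sup\{k : X(k)=0\}} < 1$ makes the instantaneous-descendant trees subcritical Bienaym\'e--Galton--Watson processes with a.s.\ finite progeny. The only difference is cosmetic: you establish finiteness of that set directly via the decomposition $A = A_1 \sqcup A_{\geq 2}$, whereas the paper argues by contradiction from the assumption that the set $\Lambda$ of individuals born at $\tau_k = \tau_\infty$ is infinite.
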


Finally, we relate $\tau_\infty$ with the explosion times of all individuals in $\cU_\infty$. 

\begin{lemma}[\cite{IyerLod23}]\label{lem:expl-min-exp-time}
	Let $(\mathscr{T}_{t})_{t \geq 0}$ be  a $(X,W)$-CMJ branching process that satisfies Assumption~\ref{ass:star}. Almost surely, $\tau_{\infty} = \inf_{u \in \cU_{\infty}}\{\mathcal{B}(u) + \mathcal{P}(u)\}$. 
\end{lemma}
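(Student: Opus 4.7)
The plan is to prove the two inequalities separately. The bound $\tau_\infty \leq \inf_u\{\cB(u)+\cP(u)\}$ is direct from the definitions: for any $u\in\cU_\infty$, its children's birth times satisfy $\cB(uk)=\cB(u)+\cP_k(u)\nearrow \cB(u)+\cP(u)$ as $k\to\infty$, so for every $t>\cB(u)+\cP(u)$ the tree $\mathscr T_t$ contains infinitely many children of $u$, forcing $\tau_\infty\leq t$. Letting $t\downarrow \cB(u)+\cP(u)$ and infimising over $u$ gives the bound; note Assumption~\ref{ass:star}(a) applied to $\varnothing$ guarantees $\cP(\varnothing)<\infty$ a.s., so the infimum is finite a.s.

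For the reverse inequality, my plan is a proof by contradiction. Suppose that on an event $\cA$ of positive probability, $M:=\inf_u\{\cB(u)+\cP(u)\}>\tau_\infty$. Under Assumption~\ref{ass:star}(a) we have $\tau_\infty\leq \cP(\varnothing)<\infty$ a.s., and by Lemma~\ref{lemma:tree-ident} together with the fact that $\varnothing$ alone explodes, $|\mathscr T_{\tau_\infty}|=\infty$ on $\cA$. The assumption $\cB(u)+\cP(u)>\tau_\infty$ for every $u$, combined with $\cP_k(u)\nearrow\cP(u)$, implies that each vertex of $\mathscr T_{\tau_\infty}$ has only finitely many children there, so K\"onig's lemma furnishes an infinite path $\varnothing=u^{(0)}, u^{(1)}, u^{(2)}, \ldots$ inside $\mathscr T_{\tau_\infty}$. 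The task is then to contradict the existence of this path using Propositions~\ref{prop:finite-l-moderate} and~\ref{prop:local-explosions}. By Proposition~\ref{prop:finite-l-moderate}, $|\mathscr T_{\tau_\infty}\cap\cU_{K'}|<\infty$ a.s., so there exists a smallest index $j_0$ with $u^{(j_0)}\notin\cU_{K'}$; setting $v:=u^{(j_0)}$, the tail of the path lies in the sub-tree rooted at $v$, which by the i.i.d.\ structure is distributed as the process rooted at $\varnothing$ and independent from the data defining $v$ and its prefix. Iterating the argument inside this sub-tree (the tail is still an infinite path in a sub-tree of $\mathscr T_{\tau_\infty}$) forces a nested sequence of "first exits from $\cU_{K'}$" at depths $j_0<j_1<\cdots$, each of which identifies a descendant whose first sub-tree coordinate in its host sub-tree exceeds $K'$ and which (being on a strictly descending path through a tree of finite-degree vertices in $\mathscr T_{\tau_\infty}$) explodes before all its sub-tree ancestors. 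This produces infinitely many individuals fitting the hypothesis of Proposition~\ref{prop:local-explosions} (applied in the respective sub-trees), contradicting its almost-sure finiteness conclusion. Hence $\mathbb P(\cA)=0$ and $\tau_\infty\geq M$ a.s.

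The main obstacle is the extraction step: converting the existence of the infinite path into infinitely many exploders-before-all-ancestors that each sit with "first coordinate exceeding $K'$" in an appropriate sub-tree where Proposition~\ref{prop:local-explosions} can be invoked. The subtlety lies in simultaneously tracking (i) that each chosen individual is in fact a strict running minimum of $\cB(\cdot)+\cP(\cdot)$ along its lineage on the path (using the continuity of the distribution of $\cP$ under Assumption~\ref{ass:star}(c) to rule out ties on a null set), and (ii) that the recursive descent into nested sub-trees can continue indefinitely without being trapped inside the finitely-many $K'$-moderate individuals of any single sub-tree, which is exactly what Proposition~\ref{prop:finite-l-moderate} rules out at each recursive level. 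Once this bookkeeping is done, the union bound over the a.s.-finitely-many $K'$-moderate ancestors of $v$ yields the desired contradiction.
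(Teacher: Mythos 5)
Your first inequality is fine, but the reverse direction has a genuine gap at the extraction step, and it is not merely bookkeeping. From the contradiction hypothesis you correctly obtain, via K\H{o}nig's lemma, an infinite path $\varnothing=u^{(0)},u^{(1)},\ldots$ in $\mathscr{T}_{\tau_\infty}$; but the claim that the successive ``first exits from $\cU_{K'}$'' along this path \emph{explode before all their (sub-tree) ancestors} is unjustified. Lying on an infinite path in $\mathscr{T}_{\tau_\infty}$ constrains the \emph{birth} times $\cB(u^{(n)})$ to be at most $\tau_\infty$, but says nothing about the relative order of the \emph{explosion} times $\cB(u^{(n)})+\cP(u^{(n)})$ along the lineage. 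The only vertices of the path that explode before all their ancestors are the strict running minima of $n\mapsto \cB(u^{(n)})+\cP(u^{(n)})$, and if that infimum is attained (which nothing in your set-up excludes, since the hypothesis only forces every value to exceed $\tau_\infty$) there are finitely many of them, so Proposition~\ref{prop:local-explosions} yields no contradiction. Two further inaccuracies: Condition~\ref{item:starnonzero} does not assert that $\cP$ has an atomless law, so the tie-breaking step you invoke is not available; and the mere existence of an infinite path in $\mathscr{T}_{\tau_\infty}$ is not contradictory under Assumption~\ref{ass:star} alone (the paper excludes infinite paths in $\cT_\infty$ only under additional no-atom hypotheses), so any argument of this shape must use the hypothesis $\cB(u)+\cP(u)>\tau_\infty$ quantitatively, which yours does not.

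The paper's proof avoids the path altogether and uses uniform explosivity directly. Writing $\tau_\infty^+:=\inf_{u}\{\cB(u)+\cP(u)\}$, it notes that $\cT_\infty\subseteq\mathscr{T}_{\tau_\infty}$ is a.s.\ infinite while, by Proposition~\ref{prop:finite-l-moderate} and Lemma~\ref{lemma:tree-ident}, it contains only finitely many $(L-1)$-moderate elements; hence, by pigeonhole, for every $L$ there exists $u^*$ with $u^*L\in\cT_\infty$, i.e.\ $\cB(u^*L)\leq\tau_\infty$. Condition~\ref{item:stardom} then gives
\[
\cB(u^*)+\cP(u^*)-\tau_\infty \;\leq\; \cB(u^*)+\cP(u^*)-\cB(u^*L) \;=\; \sum_{j=L+1}^{\infty}X(u^*j)\;\preceq\; Y_L ,
\]
and $\P{Y_L>\eps}\to 0$ as $L\to\infty$ (after conditioning on a sigma-algebra making $u^*$ measurable), so letting $L\to\infty$ and then $\eps\downarrow 0$ yields $\tau_\infty^+\leq\tau_\infty$. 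If you wish to keep your structure, you must replace the appeal to Proposition~\ref{prop:local-explosions} by this quantitative use of the tail bound $Y_L$; as written, the step producing ``infinitely many exploders-before-all-ancestors'' fails.
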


Combining these three lemmas with Proposition~\ref{prop:local-explosions}, we are ready to prove Theorem~\ref{thm:star}. Though the proof is identical to the proof of Theorem $2.5$ in~\cite{IyerLod23}, as it uses the above three results and a weaker version of Proposition~\ref{prop:local-explosions}, we include it for the article to be self-contained.

\begin{proof}[Proof of Theorem~\ref{thm:star}]
	Fix $K'$ as in Proposition~\ref{prop:local-explosions}. We may view any $w \in \mathcal{U}_{\infty}$ as a concatenation $w = uv$, where $u \in \mathcal{U}_{K'}$ is $K'$-moderate, and $v = v_1 \cdots v_{k}$, where $v_{1} > K'$ (here we also allow $v$ to be empty, so that $K'$-moderate nodes $w$ may also be interpreted as a concatenation). Now, note that (on $\mathcal{B}(u) < \infty$) the birth times $\mathcal{B}(uv) - \mathcal{B}(u) \sim \mathcal{B}(v)$, and thus, by arguments analogous to those appearing in Proposition~\ref{prop:local-explosions}, for any $u \in \mathcal{U}_{\infty}$ (in particular for $u \in \mathcal{U}_{K'}$),
	\be \label{eq:moderate-prefix}
	\E{\left| \left\{a = a_1 \cdots a_{m} \in \cU_\infty: a_1>K', u a\text{ explodes before } ua_{|_{m-1}}, ua_{|_{m-2}}, \ldots, u\right\}\right| }<\infty.
	\ee 
	Now, since $\tau_{\infty} < \infty$ almost surely, we infer from Proposition~\ref{prop:finite-l-moderate} with $L = K'$, that $|\{u \in \mathcal{U}_{K'}: \mathcal{B}(u) \leq \tau_{\infty}\}| < \infty$ almost surely. Therefore, by~\eqref{eq:moderate-prefix}, the set 
	\be \label{eq:Sfin} 
	E_{\mathrm{expl}}:=\left\{u \in \cU_\infty: \mathcal{B}(u) \leq \tau_{\infty}, u \text{ explodes before all of its ancestors} \right\}.
	\ee
	is finite almost surely.
	By the definition of $E_{\mathrm{expl}}$ and the fact that the infimum of a finite set is attained by (at least) one of its elements, Lemma~\ref{lem:expl-min-exp-time} implies that, almost surely 
	\[
	\exists u^{*}\in E_{\mathrm{expl}}: \ \mathcal{B}(u^{*}) + \mathcal{P}(u^{*}) = \inf_{v \in E_{\mathrm{expl}}} \{\mathcal{B}(v) + \mathcal{P}(v)\} = \inf_{u \in \cU_\infty} \{\mathcal{B}(u) + \mathcal{P}(u)\} = \tau_{\infty}.
	\]
	This implies that $u^{*}$ has infinite degree in $\mathscr{T}_{\tau_{\infty}}$. By Condition~\ref{item:starnonzero} of Assumption~\ref{ass:star}, we have $\sum_{i=n+1}^{\infty} X(u^{*}i) > 0$ almost surely, so that $\mathcal{B}(u^{*}i) < \tau_{\infty}$ for each $i\in\N$, almost surely. Therefore, by Lemma~\ref{lemma:tree-ident}, $u^{*}$ has infinite degree in $\mathcal{T}_{\infty}$ as well, which yields the desired result. 
\end{proof}

\subsection{Proof of Theorem~\ref{thrm:suplinpafit}}\label{sec:bspaproofs}

\begin{proof}[Proof of Theorem~\ref{thrm:suplinpafit}]
	The desired result follows by verifying the conditions on $ \lambda_n$ and the vertex-weights in Corollary~\ref{cor:suplinpa}.  We start by constructing a sequence $(\lambda_n)_{n\in\N}$ such that there exists $N\in\N$ for which we have $\lambda_n<f(i,w)$ for all $i\geq n\geq N$. We recall that there exist $\eps>0$, $n_0\in\N$, and a sequence $(k_n)_{n\in\N}$ such that $\P{g(W)>k_n}\leq n^{-(1+\eps)}$ for all $n\geq n_0$. Then, let $\delta\in(0,\eps)$ as in the statement of Theorem~\ref{thrm:suplinpafit}, recall $\mu_n$ from~\eqref{eq:mu}, and set 
	\be\label{eq:lambda}
	\lambda_n:=\delta\log(n)\mu_n^{-1}, 
	\ee  
	It is clear that $\mu_n$ is decreasing in $n$ and tends to zero by~\eqref{eq:gsass}, so that $\lambda_n$ is increasing and diverges. Then, by Assumption~\ref{ass:s}, we fix $\eta>0$ small and take $N$ large so that for all $n\geq N$,
	\be \label{eq:mubound}
	\mu_n^{-1}=\bigg(\sum_{j=n+1}^\infty \frac{1}{g(w^*)s(j)}\bigg)^{-1}\leq g(w^*)\bigg(\sum_{j=n+1}^\infty \frac{1}{\eta j^p}\bigg)^{-1}=(\eta g(w^*)(p-1)+o(1)) n^{p-1}.
	\ee 
	At the same time, again for $N$ large and all $n\geq N$, we have $s(n)\geq \eta^{-1} n^\beta$, so that 
	\be 
	\lambda_n=\delta \log(n)\mu_n^{-1}\leq (\eta \delta g(w^*)(p-1)+o(1))\log(n) n^{p-1}\leq (\eta^2 \delta g(w^*)(p-1)+o(1))\frac{\log(n)}{n^{(1+\beta)-p}} s(n).
	\ee 
	The final condition of Assumption~\ref{ass:s} then implies that 
	\be 
	\lambda_n\leq (\eta^2 \delta C g(w^*)(p-1)+o(1)) \inf_{i\geq n}s(i), 
	\ee 
	for all $n\geq N$ and some large $N\in\N$. As we can choose $\eta$ arbitrarily small, we can make the constant on the right-hand side arbitrarily small, so that 
	\be
	\lambda_n=o(\inf_{i\geq n}s(i)),
	\ee 
	as desired. 
	
	We then prove~\eqref{eq:star-explosive-rif}. \invisible{ 
		We recall that there exist $\eps>0$, $n_0\in\N$, and a sequence $(k_n)_{n\in\N}$ such that $\P{g(W)>k_n}\leq n^{-(1+\eps)}$ for all $n\geq n_0$. Then, we fix $\delta\in(0,\eps)$, recall $\mu_n$ from~\eqref{eq:mu}, and set 
		\be\label{eq:lambda}
		\lambda_n:=\delta\log(n)\mu_n^{-1}, 
		\ee  
		It is clear that $\mu_n$ is decreasing in $n$ and tends to zero by~\eqref{eq:gsass}, so that $\lambda_n$ is increasing and diverges. Furthermore, as $s$ is regularly varying with exponent one, it follows that $\mu_n$, and hence also $\lambda_n$, is slowly varying in $n$. Hence, since $n=o(s(n))$, it follows that $\lambda_n=o(s(n))$. Furthermore, as $s$ is regularly varying with exponent one, we have $\inf_{i\geq n}s(i)=(1+o(1))s(n)$, so that 
		\be 
		\inf_{i\geq n}f(i,w^*)=g(w^*)\inf_{i\geq n}s(i)\gg \lambda_n, 
		\ee 
		so that there exists $N\in\N$ such that the inequality $\lambda_n<f(i,w^*)$ for all $i\geq n$ and all $n\geq N $ is satisfied.} 
	By the choice of $f$ and assuming (without loss of generality) that $g(w^*)=1$, we have that~\eqref{eq:star-explosive-rif} is equivalent to
	\be \label{eq:condbequiv}
	\sum_{n=N}^\infty \prod_{i=n}^\infty \bigg(\frac{s(i)}{s(i)-\lambda_n}\bigg)\E{\prod_{i=0}^{n-1}\frac{g(W)s(i)}{g(W)s(i)+\lambda_n}}<\infty.
	\ee 
	By using that $1-x\leq \e^{-x}$ for all $x\in \R$, we can bound each term in the sum from above by 
	\be\ba   \label{eq:condbfitness}
	\exp{}&\bigg(\lambda_n \sum_{i=n}^\infty \frac{1}{s(i)-\lambda_n}-\lambda_n \sum_{i=0}^{n-1} \frac{1}{k_ns(i)+\lambda_n}\bigg)+\exp\bigg(\lambda_n \sum_{i=n}^\infty \frac{1}{s(i)-\lambda_n}\bigg)\P{g(W)>k_n}\\ 
	\leq{}& \exp\bigg(\lambda_n\sum_{i=n}^\infty \frac{1}{s(i)-\lambda_n}- \frac{\lambda_n}{k_n}\sum_{i=0}^{n-1}\frac{1}{s(i)+\lambda_n /k_n}\bigg)+\exp\bigg(\lambda_n\sum_{i=n}^\infty\frac{1}{s(i)-\lambda_n}\bigg)\frac{1}{n^{1+\eps}},
	\ea \ee 
	where we take $N\geq n_0$ and use that $\P{g(W)>k_n}\leq n^{-(1+\eps)}$ for all $n\geq N$ to arrive at the upper bound. Using that $\lambda_n=o(\inf_{i\geq n}s(i))$ and $g(w^*)=1$,
	\be 
	\sum_{i=n}^\infty \frac{1}{s(i)-\lambda_n}=(1+o(1))\mu_n. 
	\ee 
	Then, by the choice of $\lambda_n$ in~\eqref{eq:lambda}, we can write~\eqref{eq:condbfitness} as 
	\be \label{eq:condbfitness2}
	\exp\bigg((\delta+o(1))\log(n)- \frac{\lambda_n}{k_n}\sum_{i=0}^{n-1}\frac{1}{s(i)+\lambda_n /k_n}\bigg)+n^{-(1+\eps-\delta+o(1))}.
	\ee 
	Since $\delta<\eps$, the second term is summable in $n$. It thus remains to bound the first term.
	
	\invisible{We recall that $\sum_{j=1}^\infty 1/s(j)<\infty$, so that  $m=o(\inf_{j\geq m}s(j))$ as $m$ tends to infinity. Indeed, $\inf_{j\geq m}s(j)=(1+o(1))s(m)\gg m$ due to the summability condition on $s$.}  We recall that $p\in(1,1+\beta)$ and that $k_n\geq g(w^*)=1$ for all $n$. As a result, for some constant $C_\beta>0$ and using~\eqref{eq:mubound}, 
	\be 
	\Big(\frac{\lambda_n}{k_n}\Big)^{1/\beta}\leq   \lambda_n^{1/\beta}\leq  C_\beta (\log n)^{1/\beta} n^{(p-1)/\beta}=o(n). 
	\ee 
	We then observe that $\lambda_n/k_n$ tends to infinity with $n$. Indeed, as $\sup_{x\geq 0}\mu_x$ is finite (where we define $\mu_x$ for non-integer $x$ by linear interpolation), for all $n$ large,
	\be 
	\frac{\lambda_n}{k_n}=\delta \frac{\log n}{\mu_n k_n}\geq \frac{\mu_{(\delta \log (n)/(\mu_nk_n))^{1/\beta}}}{\mu_nk_n}, 
	\ee 
	and the right-hand side tends to infinity with $n$ by the condition in~\eqref{eq:liminflb}. Further, Assumption~\ref{ass:s} yields that $s(i)\geq M i^\beta$ for any $M$ large and all $i\geq I=I(M)\in\N$. Combined, we obtain for all $n$ large that 
	\be 
	s(i)\geq Mi^\beta \geq M\frac{\lambda_n}{k_n}, \qquad \text{for all } i\geq \Big(\frac{\lambda_n}{k_n}\Big)^{1/\beta}.
	\ee 
	As a result, 
	\be 
	\sum_{i=0}^{n-1}\frac{1}{s(i)+\lambda_n /k_n}\geq \sum_{i=(\lambda_n/k_n)^{1/\beta}}^{n-1}\frac{1}{s(i)+\lambda_n/k_n}=\frac{\mu_{(\lambda_n/k_n)^{1/\beta}}-\mu_n}{1+1/M}.
	\ee 
	Since $\lambda_n=\delta \log(n)\mu_n^{-1}$, we use the above in the first term in~\eqref{eq:condbfitness2} to obtain
	\be\label{eq:deltabound} 
	\exp\Big(\Big(\delta\frac{2M+1}{M+1}+o(1)\Big)\log(n)- \delta\frac{M}{M+1}\log(n)\frac{\mu_{(\lambda_n/k_n)^{1/\beta}}}{\mu_nk_n}\Big).
	\ee 
	By the assumption in~\eqref{eq:assknmu} it follows that we can bound the entire term from above by $n^{-C}$ for any $C>0$, since the ratio $\mu_{(\lambda_n/k_n)^{1/\beta}}/(\mu_nk_n)$ can be bounded from below by a sufficiently large constant for all $n$ large by~\eqref{eq:liminflb}. This yields~\eqref{eq:condbequiv} and concludes the proof.
\end{proof}

\section{Examples}\label{sec:examplesproofs}

In this section, we discuss the examples that Theorem~\ref{thrm:examples} deals with. The proof comes down to verifying the conditions in Theorem~\ref{thrm:suplinpafit}. We recall that we set $g(w)=w+1$. In each case, we asymptotically determine $\mu_n$, provide $k_n$, and show that the required assumptions are satisfied.

\begin{proof}[Proof of Theorem~\ref{thrm:examples}]
	\textbf{Infinite star. } To prove there exists a unique infinite star almost surely, we verify the conditions in Theorem~\ref{thrm:suplinpafit}. 
	
	\textbf{Case (i) } It is clear that Assumption~\ref{ass:s} is satisfied with $\beta=1$ and any $p\in(1,2)$. By switching from summation to integration,
	\be \label{eq:mui}
	\mu_n=\sum_{i=n}^\infty \frac{1}{s(i)}=\int_n^\infty\frac{1+o(1)}{x(\log x)^\sigma}\,\dd x=\int_{\log n}^\infty \frac{1+o(1)}{y^\sigma}\, \dd y=\big((\sigma-1)^{-1}+o(1)\big)(\log n)^{-(\sigma-1)}.  
	\ee 
	We set $k_n:=((1+\eps)\log n)^{1/\kappa}$ with $\eps>0$, and take $\delta\in(0,\eps)$. There exists $C_1>0$ such that 
	\be \label{eq:Wtaili}
	\P{g(W)>k_n}\leq C_1\e^{-k_n^\kappa}=C_1n^{-(1+\eps)},
	\ee 
	which is summable. Then, there exists $C_2>0$ such that
	\be 
	\frac{\mu_{\delta \log(n)/(\mu_nk_n)}}{\mu_nk_n}=(C_2+o(1))\frac{(\log n)^{\sigma-1}(\log\log n)^{-(\sigma-1)}}{(\log n)^{1/\kappa}}=(\log n)^{(\sigma-1)-1/\kappa-o(1)}.
	\ee 
	When $(\sigma-1)\kappa>1$, this quantity diverges with $n$, so that the condition in Theorem~\ref{thrm:suplinpafit} holds.

	\textbf{Case (ii) } It is clear that Assumption~\ref{ass:s} is satisfied with $\beta=1$ and any $p\in(1,2)$. By switching from summation to integration and substituting $y=(\log\log x)^\nu$, 
	\be \label{eq:muii}
	\mu_n=\sum_{i=n}^\infty \frac{1}{s(i)}=\int_n^\infty \!\!\!\frac{1+o(1)}{x\log x\exp((\log\log x)^\nu)}\,\dd x=(1+o(1))\int_{(\log\log n)^\nu}^\infty \!\!\!\!\!\!\!\!\!y^{1-1/\nu}\e^{-y}\dd y.
	\ee 
	By using the incomplete gamma function, we arrive at $\mu_n=\exp(-(1+o(1))(\log\log n)^\nu)$. We set $k_n:=\exp((\log((1+\eps)\log n))^{1/\gamma})$ with $\eps>0$, and take $\delta\in(0,\eps)$. There exists $C_1>0$ such that
	\be  \label{eq:Wtailii}
	\P{g(W)>k_n}\leq C_1\exp\big(-\e^{(\log k_n)^\gamma}\big)=C_1n^{-(1+\eps)}, 
	\ee 
	which is summable. Then,
	\be 
	\frac{\mu_{\delta \log(n)/(\mu_nk_n)}}{\mu_nk_n}=\exp\Big(\big[(\log\log n)^\nu-(\log\log n)^{1/\gamma}-\big(1-\tfrac1\gamma\big)^\nu (\log\log\log n)^\nu\big](1+o(1))\Big).
	\ee 
	When $\nu\gamma>1$, this quantity diverges with $n$, so that the condition in Theorem~\ref{thrm:suplinpafit} holds.
	
	\textbf{Case (iii) } It is clear that Assumption~\ref{ass:s} is satisfied with $\beta=1$ and any $p\in(1,2)$. By switching from summation to integration and with similar steps as in~\eqref{eq:mui},
	\be \label{eq:muiii}
	\mu_n=\sum_{i=n}^\infty \frac{1}{s(i)}=\int_n^\infty \frac{1+o(1)}{x\log(x)(\log\log x)^\sigma}\,\dd x=\big((\sigma-1)^{-1}+o(1)\big)(\log\log n)^{-(\sigma-1)}.
	\ee 
	We set $k_n:=(\log((1+\eps)\log n))^{1/\kappa}$ with $\eps>0$, and take $\delta\in(0,\eps)$. There exists $C_1>0$ such that
	\be  \label{eq:Wtailiii}
	\P{g(W)>k_n}\leq C_1\exp\big(-\e^{k_n^\kappa}\big)=C_1n^{-(1+\eps)}, 
	\ee 
	which is summable. Then, there exists $C_2>0$ such that
	\be 
	\frac{\mu_{\delta \log(n)/(\mu_nk_n)}}{\mu_nk_n}=(C_2+o(1))\frac{(\log \log n)^{\sigma-1}(\log\log\log  n)^{-(\sigma-1)}}{(\log\log  n)^{1/\kappa}}=(\log \log n)^{(\sigma-1)-1/\kappa-o(1)}.
	\ee 
	When $(\sigma-1)\kappa>1$, this quantity diverges with $n$, so that the condition in Theorem~\ref{thrm:suplinpafit} holds.
	
	\textbf{Case (iv) } It is clear that the first part of Assumption~\ref{ass:s} is satisfied for any  $\beta<\alpha$ and any $p\in(1,1+\beta)$. Then, as for large $n$ the smallest value $s(i)$ among all $i\geq n$ is attained at $i=\lceil \sqrt n\rceil^2$, with value $s(i)=\lceil \sqrt n\rceil^{2\alpha}=(1+o(1))n^\alpha$, it follows that the second part of Assumption~\ref{ass:s} is also satisfied, since $\alpha\in(\tfrac12,1]$ and  we can thus choose $p$ close enough to $1$ and $\beta$ close enough to $\alpha$ so that $1+\beta-p>\tfrac12>1-\alpha$. Hence, for all $n$ large, 
	\be 
	s(n)\leq (n+1)(\log(n+2))^\sigma \leq  \frac{n^{1+\beta-p}}{\log n} n^\alpha=(1+o(1))\frac{n^{1+\beta-p}}{\log n}  \inf_{i\geq n}s(i).
	\ee 
	The last part of Assumption~\eqref{ass:s} is thus satisfied with $C>1$ and $N$  sufficiently large.  Then,
	\be \label{eq:muiv}
	\mu_n =\sum_{i=n}^\infty \frac{1}{s(i)}=\sum_{\substack{i=n\\ \sqrt i\in\N}}^\infty \frac{1}{i^\alpha}+\sum_{\substack{i=n\\ \sqrt i\not\in\N}}^\infty \frac{1}{(i+1)(\log(i+2))^\sigma}. 
	\ee
	As $i^\alpha\leq (i+1)(\log(i+2))^\sigma$ for all $i\geq n$ when $n$ is large since $\alpha\leq 1$, we can use~\eqref{eq:mui} to obtain the lower bound 
	\be 
	\mu_n\geq \big((\sigma-1)^{-1}+o(1)\big)(\log n)^{-(\sigma-1)}. 
	\ee 
	For an upper bound, we include all $i\geq n$ in the second sum in~\eqref{eq:muiv} and again use~\eqref{eq:mui}. The first sum on the right-hand side of~\eqref{eq:muiv} is then bounded from above by
	\be 
	\sum_{\substack{i=n\\ \sqrt i\in\N}}\frac{1}{i^\alpha}=\sum_{i=\lceil \sqrt n\rceil}^\infty i^{-2\alpha}= \frac{1+o(1)}{2\alpha -1}n^{1/2-\alpha}.
	\ee 
	As a result, the upper bound matches the lower bound regardless of the value of $\alpha\in(\tfrac12,1]$, to obtain 
	\be \label{eq:muivreal}
	\mu_n=\big((\sigma-1)^{-1}+o(1)\big)(\log n)^{-(\sigma-1)}.
	\ee 
	The remainder of the computations then follow the same approach as Case (i). 
	
	\textbf{Infinite path. } To prove there exists a unique infinite path, we use~\cite[Lemma $6.4$]{IyerLod23} to verify the conditions in Theorem~\ref{thrm:path}. That is, if, for any $w\geq 0$, 
	\be 
	\limsup_{n\to\infty}\frac{1}{\mu_n^w}\sum_{i=0}^\infty \frac{1}{f(i,k_n)}<1,
	\ee 
	where $k_n$ is a sequence such that $\P{g(W)>k_n}$ is not summable, then $T_\infty$ contains a unique infinite path and no node of infinite degree, almost surely. As $f(i,x)=g(x)s(i)=(x+1)s(i)$, it follows from~\eqref{eq:gsass} that is suffices to show that $\mu_n k_n$ diverges.
	
	\textbf{Case (i) } With $\mu_n$ as in~\eqref{eq:mui} and $k_n:=((1-\eps)\log n)^{1/\kappa}$, it follows that $\P{g(W)>k_n}$ is not summable when the inequality in~\eqref{eq:Wtaili} is reversed (and using a constant $C_1'<C_1$). We then conclude that $\mu_nk_n$ diverges when $(\sigma-1)\kappa<1$.
	
	\textbf{Case (ii) } With $\mu_n$ as in~\eqref{eq:muii} and $k_n:=\exp((\log(1-\eps)\log n))^{1/\gamma})$, it follows that $\P{g(W)>k_n}$ is not summable when the inequality in~\eqref{eq:Wtailii} is reversed (and using a constant $C_1'<C_1$). We then conclude that $\mu_nk_n$ diverges when $\nu \gamma <1$.
	
	\textbf{Case (i) } With $\mu_n$ as in~\eqref{eq:muiii} and $k_n:=(\log((1-\eps)\log n))^{1/\kappa}$, it follows that $\P{g(W)>k_n}$ is not summable when the inequality in~\eqref{eq:Wtailiii} is reversed (and using a constant $C_1'<C_1$). We then conclude that $\mu_nk_n$ diverges when $(\sigma-1)\kappa<1$.
	
	\textbf{Case (iv) } With $\mu_n$ as in~\eqref{eq:muivreal}, we can follow the same steps as in Case (i) to arrive at the desired result.
\end{proof} 
	
	\section*{Acknowledgements}
	
	\noindent The author wishes to thank Thomas Gottfried for some useful discussions, as well as Tejas Iyer for the stimulating collaboration on work that precedes the present paper and useful discussions. 
	
	The author has received funding from the European Union’s Horizon 2022 research and innovation programme under the Marie Sk\l{}odowska-Curie grant agreement no.\ $101108569$.

	\bibliographystyle{abbrv}
	\bibliography{refs}

\end{document}